\documentclass{amsart}

\usepackage{amssymb, amsmath, amsthm}
\usepackage{mathrsfs}
\usepackage{mathabx}
\usepackage{verbatim}
\usepackage[shortlabels]{enumitem}
\usepackage[numbers,sort&compress]{natbib}
\usepackage[fleqn,tbtags]{mathtools}
\usepackage[colorlinks,linkcolor={blue},citecolor={blue},urlcolor={red},]{hyperref}

\let\le\leqslant

\let\ge\geqslant

\theoremstyle{plain}
\newtheorem{theorem}{Theorem}[section]
\theoremstyle{remark}
\newtheorem{remark}[theorem]{Remark}
\newtheorem{example}[theorem]{Example}
\theoremstyle{plain}
\newtheorem{corollary}[theorem]{Corollary}

\newtheorem{proposition}[theorem]{Proposition}
\newtheorem{definition}[theorem]{Definition}

\newtheorem{assumption}[theorem]{Assumption}

\numberwithin{equation}{section}

\newcommand{\E}{\mathbb E}

\newcommand{\R}{\mathbb R}
\newcommand{\scrH}{\mathscr H}

\newcommand{\scrL}{\mathscr L}
\newcommand{\Ran}{\operatorname{Ran}}
\newcommand{\Ker}{\operatorname{Ker}}
\newcommand{\Dom}{\operatorname{D}}

\newcommand{\dd}{\,{\rm d}}
\newcommand{\iprod}[2]{( #1|#2 )}
\newcommand{\lb}{\langle}
\newcommand{\rb}{\rangle}
\newcommand{\one}{{\bf 1}}
\newcommand{\Om}{\Omega}

\newcommand{\Tr}{{\rm Tr}}

\title[A Bismut--Elworthy--Li formula]{A Bismut--Elworthy--Li formula for linear stochastic evolution equations in Banach spaces}
\author{Jan van Neerven}
\date{\today}

\dedicatory{Dedicated to Bohdan Maslowski on the occasion of his 70th birthday}

\begin{document}

\begin{abstract}
We prove a Bismut--Elworthy--Li formula for linear stochastic evolution equations in Banach spaces. The admissible directions are those for which the deterministic orbit is square integrable with values in the Hilbertian range of the noise coefficient. Under this square function condition the derivative of the semigroup is represented by a stochastic integral against an explicit deterministic control. The result is a concrete control realisation of the Cameron--Martin formula for the fixed-time Gaussian transition measures.
\end{abstract}

\date{\today}

\keywords{Bismut-Elworthy-Li formula, linear stochastic evolution equations}

\subjclass[2020]{Primary: 60H15, Secondary: 60H07}

\maketitle

\section{Introduction}\label{sec:introduction}

Bismut--Elworthy--Li formulas express derivatives of Markov transition
semigroups without differentiating the test function. In their simplest form
they represent a derivative of \(P_tf\) as an expectation of \(f\) multiplied by
a stochastic integral. Such formulas go back to Bismut's integration by parts
method and to the probabilistic derivative formulas of Elworthy and Li; see
\cite{Bismut1984, ElworthyLi1994}. In infinite dimensions they play an
important role in the study of smoothing properties of transition semigroups,
in particular in connection with strong Feller estimates and uniqueness
questions for invariant measures; see, for instance,
\cite{DaPratoZabczyk1992, DaPratoZabczyk1996, PeszatZabczyk1995}.

In the Hilbert space theory of Ornstein--Uhlenbeck semigroups, such smoothing
questions are closely tied to Cameron--Martin spaces, invariant Gaussian
measures, and controllability properties. Strong Feller and gradient estimates
for Hilbert space Ornstein--Uhlenbeck semigroups have been studied extensively;
see, for instance,
\cite{ChojnowskaMichalikGoldys2002, GoldysVanNeerven2003, PeszatZabczyk1995}.
Related links between equivalence of Gaussian transition laws and
null-controllability appear in \cite{MaslowskiVanNeerven2012}.

The aim of the present paper is to give, in the setting of linear stochastic evolution equations with additive noise, a concrete square function condition which produces an explicit stochastic integral weight in the Bismut--Elworthy--Li formula.

Let \(S=(S(t))_{t\ge0}\) be a strongly continuous semigroup on a real Banach space \(E\) with generator \(A\), and consider the linear stochastic evolution equation

\begin{equation}\label{eq:ACP}
\left\{
\begin{aligned}
 \dd U_t & = AU_t\dd t+B\dd W_{H}(t),\quad t\ge 0,
 \\ U_0 & = x.
\end{aligned}
\right.
\end{equation}
Here, \(W_{H}\) is an \(H\)-cylindrical Brownian motion over a real separable Hilbert space \(H\), the operator \(B\in\scrL(H,E)\) is bounded and linear, and the initial condition \(x\) is an element of \(E\). Under the stochastic integrability assumption stated in Assumption \ref{ass:fixed-time-integrability}, the stochastic convolution
\[
    W_A(t)=\int_0^t S(t-s)B\,\dd W_{H}(s)
\]
is well defined as an \(E\)-valued centred Gaussian random variable. With this
notation, the mild solution of \eqref{eq:ACP} is given by
\[
    U_t^x=S(t)x+W_A(t), \quad t\ge 0.
\]
The process \(U^x\) is Markovian, and its transition semigroup on the space \(B_{\rm b}(E)\) of bounded Borel functions on \(E\), given by
\[
    P_tf(x)=\E f(U_t^x), \quad t\ge 0,
    \ x\in E,
\]
 is called the {\em Ornstein--Uhlenbeck semigroup} associated with \(A\) and \(B\).

The proof is guided by the following observation.  If
\(h\in L^2(0,t;H)\) is a deterministic function and
\[
    R_t h:=\int_0^t S(t-s)Bh(s)\dd s,
\]
then the condition
\[
    R_th=S(t)\xi
\]
means that the deterministic shift \(S(t)\xi\) belongs to the
Cameron--Martin space of the Gaussian random variable \(W_A(t)\).  For
\(g(y):=f(S(t)x+y)\) one has
\[
    P_tf(x+\varepsilon\xi)
    =
    \E g(W_A(t)+\varepsilon R_th).
\]
The Cameron--Martin differentiation formula therefore gives
\[
    D_\xi P_tf(x)
    =
    \E[f(U_t^x)W_t(h)]
    =
    \E\Bigl[
    f(U_t^x)
    \int_0^t\iprod{h(s)}{\dd W_H(s)}_H
    \Bigr],
\]
where \(W_t\) is the isonormal process over \(L^2(0,t;H)\) induced by
\(W_H\).  Thus one is led to choose \(h\) so that \(R_th=S(t)\xi\).
If, naively, \(B\)
had an inverse and if \(B^{-1}S(\cdot)\xi\) were square integrable, then for any
\(a\in W^{1,\infty}(0,t)\) with \(a(0+)=0\) and \(a(t-)=1\) the choice
\[
    h_\xi(s)=a'(s)B^{-1}S(s)\xi
\]
would give
\[
    R_th_\xi = \int_0^t a'(s)S(t-s)BB^{-1}S(s)\xi\dd s = S(t)\xi.
\]

Of course, in the Banach space setting \(B\) need be neither injective nor
surjective, and the expression \(B^{-1}S(s)\xi\) has no literal meaning in
general. The main point of the paper is to make this computation
precise by replacing the inverse of \(B\) with the inverse of \(B\) on its
Hilbertian range, and by identifying those directions for which
the resulting deterministic control is square integrable. This is done in
Section \ref{sec:noise-range-square-function}, where the space \(E_B\), the
operator \(B^\dagger\), and the space of admissible directions \(\mathscr D_t\) are introduced.
The control identity resulting from this construction is
\[
    \int_0^t S(t-s)Bh_\xi(s)\dd s=S(t)\xi, \quad \xi\in\mathscr D_t.
\]

The appearance of a deterministic control problem is not accidental. In the
linear Gaussian setting, smoothing properties of transition semigroups are
closely related to controllability properties; see,
for instance, \cite{MaslowskiVanNeerven2012}. The control interpretation of
Bismut-type formulae is also central in the recent work of Goldys and Peszat
\cite{GoldysPeszat2024}, where null-controllability is used to obtain
probabilistic formulae for derivatives of transition semigroups of generalised
Ornstein--Uhlenbeck processes. Here we identify directly the directions for
which the formal expression \(B^{-1}S(\cdot)\xi\) can be made rigorous through
the Hilbertian range of \(B\).

The main result, proved in Section \ref{sec:linear-bel-formula}, is the
following. If \(f:E\to\R\) is bounded and Borel measurable, \(x\in E\), and
\(\xi\in\mathscr D_t\), then \(P_tf\) is differentiable at \(x\) in the
direction \(\xi\), and
\begin{equation}\label{eq:linear-bel-intro}
    D_\xi P_tf(x) = \E\Bigl[f(U_t^x) \int_0^t \iprod{a'(s)B^\dagger S(s)\xi}{\dd W_{H}(s)}_{H} \Bigr].
\end{equation}
For the choice \(a(s)=s/t\), this gives the pointwise gradient bound
\[
    |D_\xi P_tf(x)| \le \frac{\sqrt{2/\pi}}{t}\,\|f\|_\infty
    \Bigl( \int_0^t\|S(s)\xi\|_{E_B}^2\dd s \Bigr)^{1/2}.
\]

The relation with the classical reproducing kernel Hilbert space approach is as
follows. If \(R_t:L^2(0,t;H)\to E\) is the deterministic convolution operator introduced above, the Cameron--Martin space \(H_t\) of the law of \(W_A(t)\) is the range of \(R_t\), equipped with its quotient Hilbert norm. The condition
\(S(t)\xi\in H_t\) is the classical Gaussian condition behind differentiability
in the direction \(\xi\). The square function condition \(\xi\in\mathscr D_t\) gives a concrete sufficient condition for this inclusion, and at the same time provides the explicit stochastic integral appearing in \eqref{eq:linear-bel-intro}. This connection is recalled in Section \ref{sec:relation-rkhs-criterion}; see also \cite{DaPratoZabczyk1992, GoldysVanNeerven2003, vanNeerven1998}.

In Section \ref{sec:examples-first-consequences} we work out two examples.
In the first example, \(S(t)=e^{-tA}\) is the \(C_0\)-semigroup generated by \(-A\), where \(A\) is a non-negative self-adjoint operator acting in a Hilbert space \(E\). Taking the noise operator to be \(B=(I+A)^{-\alpha}\), the range condition becomes
\[
    \int_0^t \|(I+A)^\alpha S(s)\xi\|^2\dd s<\infty.
\]
This holds for all \(\xi\in E\) when \(0\le\alpha\le1/2\), and on a natural
fractional domain when \(\alpha>1/2\). The Bismut--Elworthy--Li formula, with \(a(s) = s/t\), leads to the pointwise gradient bound
\begin{align*} \ \hskip.9cm
    |D_\xi P_tf(x)|
    & \le \frac{\sqrt{2/\pi}}{t}\,\|f\|_\infty
    \Bigl(\int_0^t \|(I+A)^\alpha e^{-sA}\xi\|^2\dd s \Bigr)^{1/2}.
\intertext{Since \(\mathscr D_t =E\) for \(0\le\alpha\le1/2\), in this parameter range the semigroup \(P_t\) maps bounded Borel functions to Lipschitz continuous functions on \(E\) for every \(t>0\).
\newline\indent
The second example is the Dirichlet heat semigroup on \(L^p(0,1)\),
\(1\le p\le2\), driven by \(L^2(0,1)\)-cylindrical noise through the natural
inclusion \(L^2(0,1)\hookrightarrow L^p(0,1)\). In this case the Hilbertian
noise range is \(L^2(0,1)\), and standard heat kernel estimates imply that
\(S_p(\cdot)\xi\in L^2(0,t;L^2(0,1))\) for every \(\xi\in L^p(0,1)\). Hence
\(\mathscr D_t=L^p(0,1)\), and the Bismut--Elworthy--Li formula gives the pointwise bound
     }
    |D_\xi P_tf(x)|
    & \le C_p\,t^{-1/4-1/(2p)} \|f\|_\infty\|\xi\|_{L^p(0,1)}.
\end{align*}
Thus the heat example gives full Lipschitz regularisation of \(P_t\) on
\(L^p(0,1)\) for every \(t>0\) and every \(1\le p\le2\).

\medskip
The paper is organised as follows. Section \ref{sec:preliminaries} recalls the
Gaussian preliminaries and the stochastic integration notation. Section \ref{sec:linear-ou-equation}
sets up the linear Ornstein--Uhlenbeck equation. Section \ref{sec:noise-range-square-function} introduces \(E_B\) and \(\mathscr D_t\). Section \ref{sec:linear-bel-formula} proves the exact-control identity and the Bismut--Elworthy--Li formula. Section \ref{sec:examples-first-consequences} contains examples and first consequences, and Section \ref{sec:relation-rkhs-criterion} relates the result to the RKHS approach.

\section{Preliminaries}\label{sec:preliminaries}

In this section we fix terminology and collect some standard results needed later on in the paper. We will always assume that \(E\) is a real Banach space and \(H\) a real separable Hilbert space.

\subsection{\texorpdfstring{\(\gamma\)-Radonifying operators}{gamma-Radonifying operators}}

In this section we briefly recall the definition and relevant properties of \(\gamma\)-radonifying operators. Full accounts include \cite{HNVW-volume2, vanNeerven2010}.

Let \((\gamma_n)_{n\ge1}\) be a sequence of independent standard real Gaussian random variables. A bounded operator \(R:H\to E\) is called \emph{\(\gamma\)-radonifying} if, for one, equivalently for every, orthonormal basis \((h_n)_{n\ge1}\) of \(H\), the Gaussian series
\[
    \sum_{n\ge1}\gamma_nRh_n
\]
converges in \(L^2(\Omega;E)\). With respect to the norm
\[
    \|R\|_{\gamma(H,E)} :=
    \Bigl(\E\Bigl\|\sum_{n\ge1}\gamma_nRh_n\Bigr\|^2 \Bigr)^{1/2},
\]
the space \(\gamma(H,E)\) is a Banach space.

We have \(\gamma(H,\R) = H\) canonically. If \(E\) is a Hilbert space, then \(\gamma(H,E)\) coincides isometrically with the space of Hilbert--Schmidt operators from \(H\) into \(E\).
If \((S,\mu)\) is a measure space, \(1\le p<\infty\), and \(F\) is a Banach space, then the Kahane--Khintchine inequalities imply the isomorphism \[ \gamma(H,L^p(S,\mu;F)) \simeq L^p(S,\mu;\gamma(H,F)). \]

The class of \(\gamma\)-radonifying operators is stable under left and right multiplication with bounded operators:

\begin{proposition}[Ideal property]\label{prop:gamma-ideal-property}
Let \(H_1,H_2\) be separable Hilbert spaces and let \(E,F\) be Banach spaces. If
\(R\in\gamma(H_1,E)\), \(T\in\scrL(H_2,H_1)\), and \(L\in\scrL(E,F)\), then
\(LRT\in\gamma(H_2,F)\) and
\[
    \|LRT\|_{\gamma(H_2,F)} \le \|L\|_{\scrL(E,F)} \|R\|_{\gamma(H_1,E)} \|T\|_{\scrL(H_2,H_1)}.
\]
\end{proposition}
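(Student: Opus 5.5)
The plan is to split the statement into a left multiplication by \(L\) and a right multiplication by \(T\), and to handle each separately using the defining Gaussian series. Since \(LRT=L(RT)\), it suffices to prove two bounds: \(\|LR\|_{\gamma(H_1,F)}\le\|L\|\,\|R\|_{\gamma(H_1,E)}\) and \(\|RT\|_{\gamma(H_2,E)}\le\|R\|_{\gamma(H_1,E)}\|T\|\).

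The left multiplication is immediate. Fix an orthonormal basis \((h_n)\) of \(H_1\). Applying the bounded operator \(L\) to the \(L^2(\Omega;E)\)-convergent series \(\sum_n\gamma_nRh_n\) gives an \(L^2(\Omega;F)\)-convergent series \(\sum_n\gamma_nLRh_n\). Its second moment is at most \(\|L\|^2\) times that of the original series, so \(LR\in\gamma(H_1,F)\) with the claimed bound.

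For the right multiplication I would use a finite-rank reduction together with a Gaussian covariance domination (Anderson-type) argument. First assume \(H_2\) has finite dimension, or work with finite sections. Let \((e_k)_{k\le N}\) be orthonormal in \(H_2\). The vector \(X_N=\sum_{k\le N}\gamma_kRTe_k\) is centred Gaussian in \(E\), and for \(x^*\in E^*\) its variance is
\[
\sum_{k\le N}\langle RTe_k,x^*\rangle^2=\|P_NT^*R^*x^*\|^2\le\|T\|^2\|R^*x^*\|^2 ,
\]
where \(P_N\) is the orthogonal projection onto \(\mathrm{span}\{e_1,\dots,e_N\}\). The right-hand side is \(\|T\|^2\) times the variance of \(\langle Y,x^*\rangle\), with \(Y=\sum_n\gamma_nRh_n\). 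This step, passing from domination of covariances to domination of second moments, is the main obstacle. I would argue it as follows. Since \(R\) is \(\gamma\)-radonifying, \(Y\) is a Gaussian random vector in \(E\). Because \(\|T\|^2\operatorname{Cov}(Y)-\operatorname{Cov}(X_N)\) is a positive symmetric operator, and \(X_N\) is finite-dimensional, it is the covariance of some Gaussian vector \(Z\) independent of \(X_N\). To see this, work in the finite-dimensional range: the difference, restricted as a form and then pushed forward, is realised by a Gaussian variable in the closed span. Then \(\|T\|Y\) has the same law as \(X_N+Z\). Since \(\|\cdot\|^2\) is convex and \(Z\) is symmetric and independent of \(X_N\), Jensen's inequality gives \(\E\|X_N\|^2\le\E\|X_N+Z\|^2=\|T\|^2\E\|Y\|^2\).

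Alternatively, to avoid constructing \(Z\) in infinite dimensions, I would use the covariance domination lemma directly for finite-dimensional projections of \(Y\), and then pass to the limit by Fatou's lemma.

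Finally, to get convergence rather than only a uniform bound, I would apply the same estimate to blocks \(\sum_{M<k\le N}\gamma_kRTe_k\). The variance of such a block is \(\|(P_N-P_M)T^*R^*x^*\|^2\). This is controlled by the tail \(\sum_{n>K}\langle Rh_n,x^*\rangle^2\) plus a small error obtained by approximating \(R\) in \(\gamma\)-norm with a finite-rank operator \(R_K=\sum_{n\le K}h_n\otimes Rh_n\). For the finite-rank \(R_K\), the operator \(R_KT\) is finite rank and trivially \(\gamma\)-radonifying. The bound already proved, applied to \(R-R_K\), then shows that \(\sum_k\gamma_kRTe_k\) is Cauchy in \(L^2(\Omega;E)\). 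Hence \(RT\in\gamma(H_2,E)\), with the norm bound inherited in the limit.
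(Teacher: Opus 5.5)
The paper gives no proof of this proposition; it quotes it as standard from its references. The standard proof runs along the lines you propose: left multiplication is immediate, right multiplication uses a Gaussian covariance comparison on finite orthonormal systems, and convergence comes from density of finite-rank operators. Your left-ideal step, the Jensen inequality $\E\|X_N\|^2\le\E\|X_N+Z\|^2$, and the closing Cauchy argument via $R-R_K$ are all fine.

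The step that does not hold up as written is the existence of $Z$. Positivity of $\|T\|^2\operatorname{Cov}(Y)-\operatorname{Cov}(X_N)$ does not make it a Gaussian covariance on a Banach space: the identity of $\ell^2$ is positive but is not a covariance. Nor can you invoke the general fact that an operator dominated by a Gaussian covariance is again one. Writing the smaller operator as $RCR^*$ with $0\le C\le I$, that fact amounts to $RC^{1/2}\in\gamma(H_1,E)$, which is the right ideal property itself. Your remark about ``the finite-dimensional range'' points in the right direction. However, the difference operator is not of finite rank, so the reduction must be made explicit.

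Here is one way to do it:
\begin{enumerate}[(a)]
\item Put $V:=\operatorname{span}\{Te_1,\dots,Te_N\}$ and $M:=\dim V$. Choose an orthonormal basis $(f_j)_{j\ge1}$ of $H_1$ with $f_1,\dots,f_M$ spanning $V$, and write $Te_k=\sum_{j\le M}t_{jk}f_j$.
\item Then $X_N=\sum_{j\le M}\eta_jRf_j$ with $\eta:=t\gamma$. The covariance matrix of $\eta$ satisfies $tt^\top\le\|T\|^2I_M$.
\item Take $\zeta$ an independent centred Gaussian vector in $\R^M$ with covariance $\|T\|^2I_M-tt^\top$. Then $\eta+\zeta$ has the law of $\|T\|(\gamma_1,\dots,\gamma_M)$, and
\[
\E\|X_N\|^2\le\E\Bigl\|\sum_{j\le M}(\eta_j+\zeta_j)Rf_j\Bigr\|^2=\|T\|^2\,\E\Bigl\|\sum_{j\le M}\gamma_jRf_j\Bigr\|^2\le\|T\|^2\|R\|_{\gamma(H_1,E)}^2 .
\]
\end{enumerate}
The last inequality holds because the $\gamma$-norm may be computed in the basis $(f_j)$. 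Moreover, a partial sum is the conditional expectation of the full sum given $\gamma_1,\dots,\gamma_M$.

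Equivalently, your $Z$ is $\sum_{j\le M}\zeta_jRf_j+\|T\|\sum_{j>M}\gamma_j'Rf_j$, with $(\gamma_j')$ an independent Gaussian sequence. Its existence rests on the basis independence of $\gamma$-radonification (built into the paper's definition), not on positivity.

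With this in place the rest of your argument goes through unchanged. Your vague alternative via ``finite-dimensional projections of $Y$'' and Fatou's lemma is then not needed.
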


This standard ideal property of \(\gamma\)-radonifying operators will be used
without further comment.

\subsection{Isonormal Gaussian processes and cylindrical Brownian motion} \label{subsec:isonormal-processes}

We shall use two closely related ways of representing Gaussian noise.

Let \(\scrH\) be a real separable Hilbert space. An \emph{\(\scrH\)-isonormal Gaussian process} is a bounded linear operator
\[
    W:\scrH\to L^2(\Omega)
\]
such that, for all \(h_1,\ldots,h_N\in \scrH\), the random vector
\((W h_1,\ldots,W h_N)\) is centred Gaussian and
\[
    \E(W h_1\,W h_2) = \iprod{h_1}{h_2}_{\scrH}, \quad h_1,h_2\in \scrH.
\]
For more details, and the connection to Malliavin Calculus, we refer to \cite{Nualart2006}.

Let \(H\) be a real separable Hilbert space. An \emph{\(H\)-cylindrical Brownian
motion} is a family
\[
    W_H(t):H\to L^2(\Omega), \quad t\ge0,
\]
of bounded linear operators such that, for all \(N\ge1\), all \(h_1,\ldots,h_N\in H\), and all \(t_1,\ldots,t_N\ge0\), the random vector \((W_H(t_1)h_1,\ldots,W_H(t_N)h_N)\) is centred Gaussian and
\[
    \E\bigl(W_H(t_i)h_i\,W_H(t_j)h_j\bigr)
    = (t_i\wedge t_j)\iprod{h_i}{h_j}_H, \quad 1\le i,j\le N.
\]
For each \(h\in H\), the process \(t\mapsto W_H(t)h\) is a real-valued
Brownian motion with variance \(\|h\|_H^2\).

The two notions are related in the following way. If \(W_H\) is an \(H\)-cylindrical Brownian motion, then the formula
\[
    W\bigl(\one_{(a,b]}\otimes h\bigr)
    := W_H(b)h-W_H(a)h, \quad 0\le a<b<\infty,\ h\in H,
\]
extends uniquely to an \(L^2(\R_+;H)\)-isonormal Gaussian process. For this extension we use the notation
\begin{align}\label{eq:Wh}
    W(f) = \int_0^\infty \iprod{f(s)}{\dd W_H(s)}_H, \quad f\in L^2(\R_+;H).
\end{align}
When \(f\in L^2(0,t;H)\), we regard
\(f\) as an element of \(L^2(\R_+;H)\) by extending it by zero outside
\((0,t)\), and write
\[
    W(f) = \int_0^t \iprod{f(s)}{\dd W_H(s)}_H.
\]
Conversely, if \(W\) is an \(L^2(\R_+;H)\)-isonormal Gaussian process, then
\[
    W_H(t)h := W(\one_{(0,t]}\otimes h), \quad t\ge 0,\ h\in H,
\]
defines an \(H\)-cylindrical Brownian motion.

\subsection{The Cameron--Martin formula}\label{subsec:cameron-martin-gaussian}

If \(R\in\gamma(\scrH,E)\) and \(W\) is an
isonormal Gaussian process over \(\scrH\), we denote by \(X_R\) the centred
\(E\)-valued Gaussian random variable canonically associated with \(R\),
characterised by
\[
    \lb X_R, x^*\rb = W(R^*x^*), \quad x^*\in E^*.
\]
Such a random variable exists and is unique up to a set of measure zero. In fact, for any orthonormal basis \((e_n)_{n\ge1}\)
of \(\scrH\), the series
\begin{align}\label{eq:XR}
   X_R = \sum_{n\ge1} W(e_n)Re_n
\end{align}
converges in \(L^2(\Omega;E)\) since \(R\) is \(\gamma\)-radonifying, is independent of the chosen basis, and has the desired properties.

We need the Cameron--Martin formula in the following standard form; see \cite[Corollary~2.4.3]{Bogachev1998}.

\begin{theorem}[Cameron--Martin formula] \label{thm:cameron-martin-formula}
Let \(R\in\gamma(\scrH,E)\), let \(W\) be an \(\scrH\)-isonormal Gaussian
process, and let \(X_R\) be the centred \(E\)-valued Gaussian random variable
canonically associated with \(R\). Then, for every bounded Borel function
\(g:E\to\R\), every \(h\in\scrH\), and every \(\varepsilon\in\R\),
\begin{equation}\label{eq:cameron-martin-formula-epsilon}
    \E g(X_R+\varepsilon Rh)
    = \E\Bigl[ g(X_R)\exp\Bigl(\varepsilon W(h) -\frac{\varepsilon^2}{2}\|h\|_\scrH^2\Bigr)\Bigr].
\end{equation}
\end{theorem}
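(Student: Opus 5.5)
We prove \eqref{eq:cameron-martin-formula-epsilon} by reducing it to the finite-dimensional Cameron--Martin theorem for a standard Gaussian vector and then passing to the limit. Replacing \(h\) by \(\varepsilon h\), we may assume \(\varepsilon=1\); the case \(h=0\) is trivial.

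\emph{Step 1: splitting off the direction \(h\).} Complete \(e_1:=h/\|h\|_\scrH\) to an orthonormal basis \((e_n)_{n\ge1}\) of \(\scrH\). By \eqref{eq:XR}, and since \(X_R\) does not depend on the choice of basis,
\[
    X_R=\gamma_1 Re_1+Y, \qquad \gamma_1:=W(e_1), \qquad Y:=\sum_{n\ge2}W(e_n)Re_n .
\]
The family \((W(e_n))_{n\ge1}\) is jointly Gaussian and orthonormal in \(L^2(\Omega)\), so it consists of independent standard Gaussians. The series defining \(Y\) converges in \(L^2(\Omega;E)\), hence almost surely along a subsequence of partial sums, and \(Y\) is therefore measurable with respect to \(\sigma(W(e_n):n\ge2)\). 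Consequently \(Y\) is independent of \(\gamma_1\). Moreover \(Rh=\|h\|_\scrH Re_1\) and \(W(h)=\|h\|_\scrH\gamma_1\).

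\emph{Step 2: the one-dimensional shift.} Put \(c:=\|h\|_\scrH\). Since \(\gamma_1\) and \(Y\) are independent, Fubini's theorem gives
\[
    \E g(X_R+Rh)=\int_E\int_\R g\bigl((s+c)Re_1+y\bigr)\,\frac{e^{-s^2/2}}{\sqrt{2\pi}}\dd s\dd\mu_Y(y),
\]
where \(\mu_Y\) denotes the law of \(Y\). In the inner integral substitute \(u=s+c\). The Gaussian density becomes
\[
    e^{-(u-c)^2/2}=e^{-u^2/2}\,e^{cu-c^2/2},
\]
and the inner integral equals \(\E\bigl[g(\gamma_1Re_1+y)e^{c\gamma_1-c^2/2}\bigr]\). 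Integrating back over \(y\), and using the independence of \(\gamma_1\) and \(Y\) once more, we obtain
\[
    \E\bigl[g(X_R)\exp\bigl(W(h)-\tfrac12\|h\|_\scrH^2\bigr)\bigr],
\]
which is the claim.

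\emph{Main obstacle.} The only delicate point is the measure-theoretic justification in Step 1: that \(Y\) is a well-defined \(E\)-valued random variable, independent of \(\gamma_1\), and that the decomposition \(X_R=\gamma_1Re_1+Y\) holds almost surely. The \(L^2\)-convergence of the series, which rests on \(R\) being \(\gamma\)-radonifying, handles all of this. Fubini's theorem applies because \(g\) is bounded and Borel and the map \((s,y)\mapsto (s+c)Re_1+y\) is continuous.
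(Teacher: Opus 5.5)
Your proof is correct, and it takes a genuinely different route from the paper, which gives no argument of its own but quotes the general Cameron--Martin theorem for Gaussian measures from Bogachev's monograph.

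To use that result in the stated form, the paper tacitly needs three further steps:
\begin{enumerate}
\item identify the Cameron--Martin space of the law of \(X_R\) with \(\Ran(R)\), carrying the quotient norm;
\item identify the measurable linear functional attached to \(Rh\) with \(W(\Pi h)\), where \(\Pi\) is the orthogonal projection of \(\scrH\) onto \((\Ker R)^\perp\);
\item remove the component \((I-\Pi)h\) by an independence argument of the kind given in Remark~\ref{rem:cameron-martin-kernel}.
\end{enumerate}

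Your adapted-basis splitting \(X_R=\gamma_1Re_1+Y\) bypasses all of this. The whole of \(h\), including any part lying in \(\Ker R\), is carried by the single coordinate \(\gamma_1=W(e_1)\). The formula therefore reduces to translating the one-dimensional standard normal density, and it holds directly for arbitrary \(h\in\scrH\). That is exactly the form needed in Theorem~\ref{thm:RKHS}, where the control \(h\) need not be the minimal-norm representative.

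The trade-off is this. The citation is shorter and places the statement in the general theory, where for instance the converse singularity statement is also available. Your argument is self-contained, relying only on the basis independence of \eqref{eq:XR}.

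One small repair is needed in your Fubini step. If \(E\) is non-separable, continuity of \((s,y)\mapsto(s+c)Re_1+y\) does not by itself give measurability with respect to \(\mathscr B(\R)\otimes\mathscr B(E)\). Since \(Y\) is strongly measurable, it takes values almost surely in a separable closed subspace \(E_0\), which you may take to contain \(Re_1\). On \(\R\times E_0\) the Borel and product \(\sigma\)-algebras coincide, and this makes the Fubini step rigorous.
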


\begin{remark}\label{rem:cameron-martin-kernel}
If \(Rh=0\), then \[\E g(X_R+\varepsilon Rh)=\E g(X_R)\] and therefore the right-hand side of
\eqref{eq:cameron-martin-formula-epsilon} is independent of \(\varepsilon\).
This can also be seen directly as follows.
Let \(x^*\in E^*\). By the defining property of \(X_R\) we have \(\lb X_R,x^*\rb = W(R^*x^*)\), and therefore
\[
    \E\bigl[\lb X_R,x^*\rb W(h)\bigr]
    = \E\bigl[W(R^*x^*)W(h)\bigr]
    = \iprod{R^*x^*}{h}_\scrH
    = \lb Rh,x^*\rb
    = 0.
\]
Thus \(W(h)\) is orthogonal in \(L^2(\Om)\) to every scalar random variable \(\lb X_R,x^*\rb\). More generally, if \(x_1^*,\ldots,x_n^*\in E^*\), then
\[
    \bigl(W(h),\lb X_R,x_1^*\rb,\ldots,\lb X_R,x_n^*\rb\bigr)
\]
is a centred Gaussian vector. The preceding computation shows that the first coordinate is
uncorrelated with each of the remaining coordinates. Hence, by the elementary independence criterion for Gaussian vectors, \(W(h)\) is independent of
\[
    \bigl(\lb X_R,x_1^*\rb,\ldots,\lb X_R,x_n^*\rb\bigr).
\]
A monotone class argument now gives independence of \(W(h)\) and the \(\sigma\)-algebra generated by the scalar random variables \(\lb X_R,x^*\rb\), \(x^*\in E^*\). Since \(X_R\) is strongly measurable, this is the \(\sigma\)-algebra generated by \(X_R\), up to completion. Hence \(W(h)\) is independent of \(X_R\).

Consequently, for every bounded Borel function \(g:E\to\R\),
\begin{align*}
    \E\Bigl[ g(X_R)\exp\Bigl(\varepsilon W(h) -\frac{\varepsilon^2}{2}\|h\|_\scrH^2\Bigr) \Bigr]
    & = \E g(X_R)\, \E\exp\Bigl(\varepsilon W(h) -\frac{\varepsilon^2}{2}\|h\|_\scrH^2\Bigr)
    \\ & = \E g(X_R),
\end{align*}
independently of \(\varepsilon\).
\end{remark}

As an application of Theorem \ref{thm:cameron-martin-formula} we have the following result.

\begin{corollary}[Differentiation in the Cameron--Martin space directions]
\label{cor:cameron-martin-differentiation}
Let \(R\in \gamma(\scrH,E)\), and let \(g:E\to\R\) be bounded and Borel measurable. Then for every \(h\in \scrH\) the map
\[
    \varepsilon\mapsto \E g(X_R+\varepsilon Rh)
\]
is differentiable at \(0\) and
\begin{equation}\label{eq:cameron-martin-differentiation}
    \Bigl.\frac{\rm d}{{\rm d}\varepsilon}\Bigr|_{\varepsilon=0}
    \E g(X_R+\varepsilon Rh) = \E[g(X_R)W(h)].
\end{equation}
\end{corollary}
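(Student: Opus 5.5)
The plan is to apply Theorem \ref{thm:cameron-martin-formula} for every \(\varepsilon\ne0\) and study the difference quotient. Fix \(h\in\scrH\) and put \(Z:=W(h)\). This is a centred Gaussian random variable with variance \(\|h\|_\scrH^2\), so it has exponential moments of all orders. By \eqref{eq:cameron-martin-formula-epsilon}, for \(\varepsilon\neq0\),
\[
    \frac{\E g(X_R+\varepsilon Rh)-\E g(X_R)}{\varepsilon}
    = \E\Bigl[g(X_R)\,\frac{\exp\bigl(\varepsilon Z-\tfrac{\varepsilon^2}{2}\|h\|_\scrH^2\bigr)-1}{\varepsilon}\Bigr].
\]
Thus it suffices to show that
\[
    \phi_\varepsilon:=\varepsilon^{-1}\Bigl(\exp\bigl(\varepsilon Z-\tfrac{\varepsilon^2}{2}\|h\|_\scrH^2\bigr)-1\Bigr)\to Z
\]
in \(L^1(\Omega)\) as \(\varepsilon\to0\). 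Since \(g\) is bounded, \(\E[g(X_R)\phi_\varepsilon]\to\E[g(X_R)Z]\) then follows from
\[
    |\E[g(X_R)(\phi_\varepsilon-Z)]|\le\|g\|_\infty\E|\phi_\varepsilon-Z|.
\]

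Pointwise convergence \(\phi_\varepsilon\to Z\) holds because \(\phi_\varepsilon\) is a difference quotient at \(\varepsilon=0\) of the smooth function \(\varepsilon\mapsto\exp(\varepsilon Z-\frac{\varepsilon^2}{2}\|h\|^2)\), whose derivative at \(0\) equals \(Z\).

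For domination, which is the only real step, I would use the mean value theorem. With \(\psi(\varepsilon):=\varepsilon Z-\frac{\varepsilon^2}{2}\|h\|^2\), we have \(\phi_\varepsilon=\psi'(\theta)e^{\psi(\theta)}\) for some \(\theta\) between \(0\) and \(\varepsilon\). For \(|\varepsilon|\le1\) this gives
\[
    |\phi_\varepsilon|\le(|Z|+\|h\|^2)\,e^{|Z|}.
\]
The right-hand side is integrable because \(\E e^{2|Z|}<\infty\) for a Gaussian \(Z\), combined with the Cauchy--Schwarz inequality. Dominated convergence then yields \(\phi_\varepsilon\to Z\) in \(L^1(\Omega)\), and hence \eqref{eq:cameron-martin-differentiation}. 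Since the limit of the difference quotients exists as \(\varepsilon\to0\) from both sides, this also establishes differentiability at \(0\).
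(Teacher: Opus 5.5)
Your proof is correct and is essentially the paper's argument: apply the Cameron--Martin formula, bound the difference quotients of $\varepsilon\mapsto\exp(\varepsilon W(h)-\frac{\varepsilon^2}{2}\|h\|_\scrH^2)$ for $0<|\varepsilon|\le1$ via the mean value theorem, and conclude by dominated convergence. The only differences are cosmetic. You justify integrability of the dominating function by elementary exponential moments of the scalar Gaussian $W(h)$ together with Cauchy--Schwarz, and your bound drops the harmless factor $e^{\|h\|_\scrH^2/2}$; the paper instead cites Fernique's theorem.
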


\begin{proof}
By Theorem \ref{thm:cameron-martin-formula},
\[
    \E g(X_R+\varepsilon Rh) = \E[g(X_R)M_\varepsilon],
    \qquad M_\varepsilon := \exp\Bigl(\varepsilon W(h)
    -\frac{\varepsilon^2}{2}\|h\|_\scrH^2\Bigr).
\]
We claim that for \(0<|\varepsilon|\le1\) the difference quotients
\((M_\varepsilon-1)/\varepsilon\) are dominated in \(L^1(\Omega)\) by an integrable random variable depending only on \(W(h)\) and \(\|h\|_\scrH\). To prove this, put \(Z:=W(h)\) and \(\sigma:=\|h\|_\scrH\), so that \(M_\varepsilon=\exp(\varepsilon Z-\frac{\varepsilon^2}{2}\sigma^2)\). For \(0<|\varepsilon|\le1\), the mean value theorem gives
\begin{align*}
    \Bigl|\frac{M_\varepsilon-1}{\varepsilon}\Bigr|
    & \le \sup_{|r|\le1} |Z-r\sigma^2| \exp\Bigl(rZ-\frac{r^2}{2}\sigma^2\Bigr)
    \le (|Z|+\sigma^2) \exp\Bigl(|Z|+\frac12\sigma^2\Bigr).
\end{align*}
The right-hand side is integrable by Fernique's theorem. This proves the claim.

Since
\[
    \Bigl.\frac{\rm d}{{\rm d}\varepsilon}\Bigr|_{\varepsilon=0}M_\varepsilon
    = \Bigl((Z-\varepsilon\sigma^2)M_\varepsilon\Bigr)\Bigr|_{\varepsilon=0} = Z= W(h),
\]
dominated convergence now gives \eqref{eq:cameron-martin-differentiation}.
\end{proof}

\section{The linear Ornstein--Uhlenbeck equation}
\label{sec:linear-ou-equation}

A function \(\Phi:\R_+\to\scrL(H,E)\)
is said to be {\em stochastically integrable} with respect to a \(H\)-cylindrical Brownian motion \(W_{H}\) if
\(t\mapsto \lb \Phi(t)h,x^*\rb\) is square integrable for all \(h\in H\) and \(x^*\in E^*\) and \(\Phi\) is represented by an operator
\(I_\Phi\in\gamma(L^2(\R_+;H),E)\)
in the sense that, for all \(0\le a<b<\infty\), \(h\in H\), and
\(x^*\in E^*\),
\[
    \bigl\lb I_\Phi(\one_{[a,b)}\otimes h),x^*\bigr\rb
    =
    \int_a^b
    \bigl\lb \Phi(s)h,x^*\bigr\rb\dd s.
\]
In that case
\[
    \int_0^\infty \Phi(s)\dd W_{H}(s)
\]
denotes the centred \(E\)-valued Gaussian random variable canonically
associated with the operator \(I_\Phi\).
Stochastic integrability on finite intervals is defined by extending the
integrand by zero outside the interval.

Let \(S=(S(t))_{t\ge0}\) be a \(C_0\)-semigroup on \(E\) with generator
\(A\), and let \(B\in\scrL(H,E)\).
The following assumption will be in place in the remainder of the paper.

\begin{assumption}[Stochastic integrability]
\label{ass:fixed-time-integrability}
The function
\[
    s\mapsto S(s)B
\]
is stochastically integrable with respect to \(W_{H}\) on every interval \([0,t]\).
\end{assumption}

This assumption easily implies that for all \(t>0\) the function
\(s\mapsto S(t-s)B\)
is stochastically integrable with respect to \(W_{H}\) on \([0,t]\), and consequently the \(E\)-valued Gaussian
random variable
\[
    W_A(t):=
    \int_0^t S(t-s)B\,\dd W_H(s)
\]
is well defined.
For \(t>0\) we put
\[
    \scrH_t:=L^2(0,t;H).
\]
By the definition of stochastic integrability, the \(\gamma\)-radonifying operator from \(\scrH_t\) to \(E\) associated with this stochastic integral is precisely the convolution operator
\[R_t v := \int_0^t S(t-s)Bv(s)\dd s.\]
Thus,
\begin{align}\label{eq:WARt}
W_A(t)=X_{R_t}.
\end{align}

As in the Introduction we now define,
for \(t\ge 0\) and \(x\in E\),
\begin{equation}\label{eq:linear-ou-solution}
    U_t^x
    :=
    S(t)x+W_A(t)
    =
    S(t)x+\int_0^t S(t-s)B\dd W_{H}(s).
\end{equation}

We write \(C_{\rm b}^1(E)\) for the space of bounded Fr\'echet differentiable
functions \(f:E\to\R\) whose derivative \(Df:E\to E^*\) is bounded and
continuous.
We write \(C_{\rm b}^{1,u}(E)\) for the subspace of \(C_{\rm b}^1(E)\)
consisting of those functions whose derivative \(Df:E\to E^*\) is uniformly
continuous.

As in the introduction we define \(P\) as the transition semigroup of the Markovian process \(U\), that is, for \(t\ge 0\) and \(x\in E\) we define
\[ P_t f(x) := \E f(U_t^x).\]
The next proposition describes the directional derivatives of \(P\).

\begin{proposition}[Directional derivatives]
\label{prop:linear-directional-differentiability-smooth}
If \(f\in C_{\rm b}^1(E)\),
then, for all \(x,\xi\in E\), the directional derivative
\[
    D_\xi P_tf(x):=
    \lim_{\varepsilon\to0}
    \frac1{\varepsilon}(P_tf(x+\varepsilon\xi)-P_tf(x))
\]
exists and is given by
\begin{equation}\label{eq:linear-directional-derivative-formula}
    D_\xi P_tf(x)
    =
    \E\,\lb S(t)\xi, Df(U_t^x)\rb.
\end{equation}
If \(f\in C_{\rm b}^{1,u}(E)\), then \(P_tf\in C_{\rm b}^1(E)\) and
\begin{equation}\label{eq:linear-frechet-derivative-formula}
    \lb \xi, DP_tf(x)\rb
    =
    \E\,\lb S(t)\xi, Df(U_t^x)\rb.
\end{equation}
\end{proposition}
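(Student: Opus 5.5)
The argument is a direct differentiation under the expectation, using that the initial condition enters \(U_t^x\) only through the deterministic shift \(S(t)x\). Since \(U_t^{x+\varepsilon\xi}=U_t^x+\varepsilon S(t)\xi\) pathwise, we have
\[
    \frac1\varepsilon\bigl(P_tf(x+\varepsilon\xi)-P_tf(x)\bigr)
    = \E\Bigl[\frac1\varepsilon\bigl(f(U_t^x+\varepsilon S(t)\xi)-f(U_t^x)\bigr)\Bigr].
\]
For \(f\in C_{\rm b}^1(E)\) the integrand converges pointwise on \(\Omega\) to \(\lb S(t)\xi, Df(U_t^x)\rb\) as \(\varepsilon\to0\), by differentiability of \(f\) at \(U_t^x(\omega)\). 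The mean value inequality bounds it by \(\|Df\|_\infty\|S(t)\|\|\xi\|\). Dominated convergence then gives \eqref{eq:linear-directional-derivative-formula}. No Gaussian structure is needed for this part.

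For the second assertion, assume \(f\in C_{\rm b}^{1,u}(E)\) and set \(\Lambda_x\xi:=\E\lb S(t)\xi,Df(U_t^x)\rb\). Then \(\Lambda_x\in E^*\) with \(\|\Lambda_x\|\le\|S(t)\|\|Df\|_\infty\); equivalently, \(\Lambda_x=S(t)^*\E Df(U_t^x)\) as a weak\(^*\) integral, but the scalar form suffices.

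To show \(\Lambda_x\) is the Fréchet derivative, write
\[
    f(U_t^x+S(t)\eta)-f(U_t^x)-\lb S(t)\eta,Df(U_t^x)\rb
    = \int_0^1\lb S(t)\eta, Df(U_t^x+rS(t)\eta)-Df(U_t^x)\rb\dd r.
\]
Let \(\omega_{Df}\) be the modulus of continuity of \(Df\). The absolute value of the right-hand side is at most \(\|S(t)\|\|\eta\|\,\omega_{Df}(\|S(t)\|\|\eta\|)\), deterministically and uniformly in \(\omega\). Taking expectations gives
\[
    |P_tf(x+\eta)-P_tf(x)-\Lambda_x\eta| = o(\|\eta\|).
\]
The uniformity in \(\omega\) is exactly where uniform continuity of \(Df\) is used.

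It remains to check that \(x\mapsto DP_tf(x)=\Lambda_x\) is bounded and continuous into \(E^*\). Boundedness was noted above. For continuity,
\[
    \|\Lambda_x-\Lambda_y\|\le\|S(t)\|\,\E\|Df(U_t^x)-Df(U_t^y)\|\le\|S(t)\|\,\omega_{Df}(\|S(t)\|\|x-y\|),
\]
because \(U_t^x-U_t^y=S(t)(x-y)\). This even gives uniform continuity. Boundedness of \(P_tf\) is trivial, so \(P_tf\in C_{\rm b}^1(E)\) and \eqref{eq:linear-frechet-derivative-formula} holds.

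The only delicate step is the Fréchet (rather than Gâteaux) differentiability. That is why the hypothesis is strengthened to \(C_{\rm b}^{1,u}\): mere continuity of \(Df\) would give only pointwise-in-\(\omega\) smallness of the remainder, not uniformity in the direction \(\eta\) on the unit sphere.
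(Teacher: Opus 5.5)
Your proof is correct and follows the paper's argument step for step. It uses dominated convergence for the directional derivative, the integral form of the remainder bounded deterministically through the modulus of continuity of \(Df\) to get Fréchet differentiability, and the same modulus to get continuity of \(DP_tf\).

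Your closing remark misplaces the role of uniform continuity, though. For each fixed \(\omega\), Fréchet differentiability of \(f\) at \(U_t^x(\omega)\) already makes the remainder \(o(\|\eta\|)\) uniformly in the direction. Uniform continuity of \(Df\) only makes this smallness uniform in \(\omega\), which dominated convergence could supply instead, so the conclusion in fact holds for \(f\in C_{\rm b}^1(E)\) with a little care about measurability.
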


\begin{proof}
For \(\varepsilon\ne0\), by the fundamental theorem of calculus we have
\[
\begin{aligned}
    \frac1{\varepsilon}(P_tf(x+\varepsilon\xi)-P_tf(x))
    & =
    \E\Bigl[
    \frac1{\varepsilon}(f(U_t^x+\varepsilon S(t)\xi)-f(U_t^x))\Bigr]
    \\ &=
    \E\int_0^1
    \lb S(t)\xi, Df(U_t^x+r\varepsilon S(t)\xi)\rb \dd r,
\end{aligned}
\]
and by continuity of \(Df\), dominated convergence gives
\eqref{eq:linear-directional-derivative-formula}.

Assume now that \(f\in C_{\rm b}^{1,u}(E)\). For \(x\in E\) define
\(L_x\in E^*\) by
\[
    L_x\xi
    :=
    \E\,\lb S(t)\xi, Df(U_t^x)\rb ,
    \quad \xi\in E.
\]
Then \(L_x\) is a bounded linear functional on \(E\) with norm
\(
\|L_x\| \le \|Df\|_\infty\|S(t)\| .
\)
We claim that \(L_x\) is the Fr\'echet derivative of \(P_tf\) at \(x\).

Indeed, for \(\xi\in E\), using the identity \(U_t^{x+\xi}=U_t^x+S(t)\xi\) and reasoning as before,
\begin{align*}
|P_tf(x+\xi)-P_tf(x)-L_x\xi|
& = \Bigl|\E\int_0^1\lb S(t)\xi, Df(U_t^x+rS(t)\xi)-Df(U_t^x) \rb \dd r \Bigr|
\\ & \quad \le
\|S(t)\|\|\xi\|
\sup_{\|y-z\|\le \|S(t)\|\|\xi\|}
\|Df(y)-Df(z)\|.
\end{align*}
Since \(Df\) is uniformly continuous, the right-hand side supremum tends to \(0\) as
\(\|\xi\|\to0\). Therefore
\[
  \lim_{\xi\to 0} \, \frac1{\|\xi\|}
    |P_tf(x+\xi)-P_tf(x)-L_x\xi| = 0 .
\]
This proves the claim.

It remains only to check that \(x\mapsto DP_tf(x)\) is continuous as a
map from \(E\) into \(E^*\). Reasoning as above, if \(x,x'\in E\), then, for \(\|\xi\|\le1\) we have
\begin{align*}
\bigl|\lb \xi,DP_tf(x)-DP_tf(x')\rb\bigr|
& =
\bigl|\E\bigl[\lb S(t)\xi, Df(U_t^x)-Df(U_t^{x'})\rb\bigr]\bigr|
\\ &\le
\|S(t)\|\sup_{\|y-z\|\le\|S(t)\|\|x-x'\|} \|Df(y)-Df(z)\|.
\end{align*}
Taking the supremum over \(\|\xi\|\le1\) and using the uniform continuity of
\(Df\), we obtain
\[
    \|DP_tf(x)-DP_tf(x')\|\to 0
    \ \ \hbox{as } x'\to x .
\]
Thus \(DP_tf\) is continuous. The boundedness of \(DP_tf\) follows from
\(\|DP_tf(x)\|\le \|Df\|_\infty\|S(t)\|\).
\end{proof}

\section{The square function condition}
\label{sec:noise-range-square-function}

Associated with the noise operator \(B\) we now define the space
\[
    E_B:=\Ran (B).
\]
We equip \(E_B\) with the Hilbert norm transported from
\((\Ker (B))^\perp\subseteq H\).
With this norm \(E_B\) is a Hilbert space, and the inclusion \(E_B\hookrightarrow E\)
is continuous, since
\[
    \|y\|\le \|B\|_{\scrL(H,E)}\|y\|_{E_B},
    \quad y\in E_B.
\]
The operator \(B\) induces an isometry from
\((\Ker(B))^\perp\) onto \(E_B\). We denote its inverse by
\[
    B^\dagger:E_B\to(\Ker(B))^\perp\subseteq H.
\]

For a direction \(\xi\in E\), we shall
need the orbit \(s\mapsto S(s)\xi\)
to take its values in \(E_B\) for almost all \(s\in(0,t)\) in a square integrable way with respect to the Hilbert norm of \(E_B\).

\begin{definition}[Admissible directions]
\label{def:linear-admissible-directions}
For \(t>0\), define
\[
    \mathscr D_t
    :=
    \Bigl\{
    \xi\in E:\ S(\cdot)\xi\in L^2(0,t;E_B)
    \Bigr\}.
\]
For \(\xi\in\mathscr D_t\) we write
\[
    \|\xi\|_{\mathscr D_t}
    :=
    \Bigl(\int_0^t\|S(s)\xi\|_{E_B}^2\dd s\Bigr)^{1/2}.
\]
\end{definition}

By definition, we have \(\xi\in\mathscr D_t\) if and only if \(B^\dagger S(\cdot)\xi\in L^2(0,t;H)\),
and in that case
\[
    \|\xi\|_{\mathscr D_t} = \|B^\dagger S(\cdot)\xi\|_{L^2(0,t;H)}.
\]
If \(\|\xi\|_{\mathscr D_t}=0\), then \(S(s)\xi=0\) for almost all
\(s\in(0,t)\). By strong continuity this holds for all \(s\in(0,t)\), and
letting \(s\downarrow0\) gives \(\xi=0\). Thus
\(\|\cdot\|_{\mathscr D_t}\) is a norm on \(\mathscr D_t\).

\section{The linear Bismut--Elworthy--Li formula}
\label{sec:linear-bel-formula}

Let \(t>0\). Let \(a\in W^{1,\infty}(0,t)\) satisfy
\begin{equation}\label{eq:a-boundary-conditions}
    a(0+)=0,
    \qquad
    a(t-)=1.
\end{equation}
For \(\xi\in\mathscr D_t\) define
\begin{equation}\label{eq:linear-exact-control}
    v_a^\xi(s):=a'(s)B^\dagger S(s)\xi,
    \quad 0<s<t.
\end{equation}
Then \(v_a^\xi\in \scrH_t=L^2(0,t;H)\), and
\begin{equation}\label{eq:linear-control-norm}
    \|v_a^\xi\|_{\scrH_t}
    \le
    \|a'\|_{L^\infty(0,t)}
    \Bigl(
    \int_0^t\|S(s)\xi\|_{E_B}^2\dd s
    \Bigr)^{1/2}.
\end{equation}

The following identity is the deterministic core of the argument.

\begin{proposition}[Exact control identity]
\label{prop:linear-exact-control}
Let \(\xi\in\mathscr D_t\). Then
\begin{equation}\label{eq:exact-control-identity}
    R_tv_a^\xi=S(t)\xi.
\end{equation}
\end{proposition}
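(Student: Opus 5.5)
The plan is to verify the identity weakly, pairing both sides with an arbitrary \(x^*\in E^*\), and then to invoke the Hahn--Banach theorem. The crucial pointwise observation is that for almost every \(s\in(0,t)\) we have \(S(s)\xi\in E_B\), and therefore \(BB^\dagger S(s)\xi = S(s)\xi\). This holds because \(B^\dagger\) is the inverse of the isometry \(B|_{(\Ker B)^\perp}\) onto \(E_B\). Consequently
\[
    S(t-s)Bv_a^\xi(s) = a'(s)S(t-s)S(s)\xi = a'(s)S(t)\xi
\]
for almost all \(s\in(0,t)\), by the semigroup property.

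The only point needing care is the meaning of \(R_tv\) for \(v\in\scrH_t\). By Section~\ref{sec:linear-ou-equation} it is defined through the \(\gamma\)-radonifying operator representing \(s\mapsto S(t-s)B\). For indicator-type functions \(\one_{[c,d)}\otimes h\) the weak identity
\[
    \lb R_t v, x^*\rb = \int_0^t \lb S(t-s)Bv(s), x^*\rb\dd s
\]
holds by definition. Both sides are bounded linear in \(v\in L^2(0,t;H)\). The left side is bounded since \(R_t\) is bounded. The right side is bounded because \(s\mapsto B^*S(t-s)^*x^*\) is weakly square integrable in \(H\), by the stochastic integrability assumption, so that the integrand equals \(\iprod{v(s)}{B^*S(t-s)^*x^*}_H\); here one uses that \(H\) is separable and \(L^2\)-integrability of \(s\mapsto\lb S(t-s)Bh,x^*\rb\) for every \(h\). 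Measurability of \(s\mapsto B^*S(t-s)^*x^*\) follows from weak measurability and Pettis's theorem. Square integrability of its norm is the standard fact that \(I_\Phi^*x^*\in L^2(0,t;H)\) is represented by this function. By density of step functions, the weak formula extends to all \(v\in\scrH_t\). I expect this identification of \(R_t\) on general \(L^2\) functions to be the main (mild) obstacle.

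Applying the weak formula to \(v=v_a^\xi\), and using that \(v_a^\xi\in\scrH_t\) by \eqref{eq:linear-control-norm}, we obtain
\[
    \lb R_tv_a^\xi,x^*\rb = \int_0^t a'(s)\lb S(t)\xi,x^*\rb\dd s = (a(t-)-a(0+))\lb S(t)\xi,x^*\rb = \lb S(t)\xi,x^*\rb.
\]
This uses absolute continuity of \(a\in W^{1,\infty}(0,t)\) together with \eqref{eq:a-boundary-conditions}. Since \(x^*\in E^*\) is arbitrary, the Hahn--Banach theorem yields \eqref{eq:exact-control-identity}.
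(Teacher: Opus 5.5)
Your proof is correct and is essentially the paper's argument. Both rest on the a.e.\ identity \(S(t-s)Bv_a^\xi(s)=a'(s)S(t)\xi\), which comes from \(BB^\dagger S(s)\xi=S(s)\xi\) and the semigroup law, followed by \(\int_0^t a'(s)\dd s=1\). The only differences are that you pair with \(x^*\in E^*\) and invoke Hahn--Banach, and that you explicitly justify that the \(\gamma\)-radonifying operator \(R_t\) acts on every \(v\in\scrH_t\) by the integral formula; the paper takes this as definitional and only checks Bochner integrability. Incidentally, the \(L^2\)-bound on \(s\mapsto B^*S(t-s)^*x^*\) that you need follows from \(\sup_{0\le r\le t}\|S(r)\|<\infty\) alone, so the stochastic integrability assumption is not needed there.
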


\begin{proof}
The map \(s\mapsto Bv_a^\xi(s)\) belongs to \(L^2(0,t;E)\). Since \(S\) is
bounded on \([0,t]\), the function
\[
    s\mapsto S(t-s)Bv_a^\xi(s), \quad 0<s<t,
\]
belongs to \(L^1(0,t;E)\). Therefore the Bochner integral defining
\(R_tv_a^\xi\) is well defined.

For almost all \(s\in(0,t)\), \(S(s)\xi\in E_B\). Hence
\(BB^\dagger S(s)\xi=S(s)\xi\), and
\[
    S(t-s)Bv_a^\xi(s)
    =
    a'(s)S(t-s)S(s)\xi
    =
    a'(s)S(t)\xi.
\]
It follows that
\[
\begin{aligned}
    R_tv_a^\xi
    =
    \int_0^t S(t-s)Bv_a^\xi(s)\dd s
    =
    \int_0^t a'(s)S(t)\xi\dd s
    =
    \Bigl(\int_0^t a'(s)\dd s\Bigr)S(t)\xi
    =
    S(t)\xi.
\end{aligned}
\]
The last equality follows from the boundary conditions on \(a\).
\end{proof}

We now turn to the main formula, which comes in two forms. Since the formula applies to bounded Borel
functions, we first fix the meaning of the derivative. For bounded Borel
\(f:E\to\R\), \(x\in E\), and \(\xi\in E\), we write
\begin{equation}\label{eq:directional-derivative-convention}
    D_\xi P_tf(x)
    :=
    \Bigl.\frac{\rm d}{{\rm d}\varepsilon}\Bigr|_{\varepsilon=0}
    P_tf(x+\varepsilon\xi).
\end{equation}

\begin{theorem}[Abstract Bismut--Elworthy--Li formula]\label{thm:RKHS}
Let Assumption \ref{ass:fixed-time-integrability} hold, and fix \(t>0\).
Suppose that \(\xi\in E\) and \(h\in\scrH_t\) satisfy
\[
    S(t)\xi=R_th.
\]
Then, for every bounded Borel function \(f:E\to\R\), the derivative
\(D_\xi P_tf(x)\) exists and
\[
    D_\xi P_tf(x)
    =
    \E[f(U_t^x)W_t(h)]
    =
    \E\Bigl[
    f(U_t^x)
    \int_0^t \iprod{\Pi_t h(s)}{\dd W_{H}(s)}_{H}
    \Bigr],
\]
where \(W_t\) is the \(\scrH_t\)-isonormal process associated with \(W_H\) and
\(\Pi_t\) denotes the orthogonal projection of \(\scrH_t=L^2(0,t;H)\)
onto \((\Ker (R_t))^\perp\).
\end{theorem}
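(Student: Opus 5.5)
The plan is to reduce the statement to Corollary \ref{cor:cameron-martin-differentiation}, applied to the operator \(R=R_t\in\gamma(\scrH_t,E)\) and the isonormal process \(W_t\). First we check that \(W_t\) is an \(\scrH_t\)-isonormal process. It is the restriction of the \(L^2(\R_+;H)\)-isonormal process \(W\) of \eqref{eq:Wh} to functions supported in \((0,t)\), extended by zero. By \eqref{eq:WARt} we have \(W_A(t)=X_{R_t}\), where \(X_{R_t}\) is built from this same \(W_t\). Indeed, \(\lb W_A(t),x^*\rb = W_t(R_t^*x^*)\) by the construction of the stochastic integral from \(I_\Phi\) with \(\Phi(s)=\one_{(0,t)}(s)S(t-s)B\).

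Next we fix \(x\in E\) and put \(g(y):=f(S(t)x+y)\). This is a bounded Borel function on \(E\). Using \(U_t^{x+\varepsilon\xi}=S(t)x+\varepsilon S(t)\xi+W_A(t)\) and the hypothesis \(S(t)\xi=R_th\), we get
\[
P_tf(x+\varepsilon\xi)=\E g(X_{R_t}+\varepsilon R_th).
\]
Corollary \ref{cor:cameron-martin-differentiation} then shows that the map \(\varepsilon\mapsto P_tf(x+\varepsilon\xi)\) is differentiable at \(0\), with derivative \(\E[g(X_{R_t})W_t(h)]=\E[f(U_t^x)W_t(h)]\). This proves existence of \(D_\xi P_tf(x)\) in the sense of \eqref{eq:directional-derivative-convention}, together with the first identity.

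For the second identity we decompose \(h=\Pi_th+(h-\Pi_th)\), where \(h-\Pi_th\in\Ker(R_t)\). By linearity of \(W_t\), \(W_t(h)=W_t(\Pi_th)+W_t(h-\Pi_th)\). Remark \ref{rem:cameron-martin-kernel}, applied with \(R=R_t\) and \(k:=h-\Pi_th\), shows that \(W_t(k)\) is independent of \(X_{R_t}=W_A(t)\). Hence
\[
\E[f(U_t^x)W_t(k)]=\E f(U_t^x)\,\E W_t(k)=0,
\]
since \(W_t(k)\) is centred. This leaves \(\E[f(U_t^x)W_t(\Pi_th)]\), and writing \(W_t(\Pi_th)\) in the notation \eqref{eq:Wh} gives the stated stochastic integral.

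The argument is essentially bookkeeping. The main point needing care is the identification in the first step: the Gaussian variable \(W_A(t)\) has to be \(X_{R_t}\) relative to the \emph{same} isonormal process \(W_t\) that appears in the weight. A distributional equality alone would not suffice, because the formula involves the joint law of \(U_t^x\) and \(W_t(h)\). I would verify this on elementary tensors \(\one_{(a,b]}\otimes h_0\) and then pass to the limit using the series representation \eqref{eq:XR}.
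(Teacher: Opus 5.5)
Your proposal is correct and matches the paper's proof of the first identity: set $g(y)=f(S(t)x+y)$, use $W_A(t)=X_{R_t}$ together with $S(t)\xi=R_th$, and apply Corollary~\ref{cor:cameron-martin-differentiation}. For the second identity the paper reapplies the first identity to $\Pi_t h$, which is allowed since $R_t\Pi_t h=R_th=S(t)\xi$, whereas you remove the $\Ker(R_t)$-component using the independence established in Remark~\ref{rem:cameron-martin-kernel}; both arguments are valid, the paper's being slightly shorter and yours showing directly why the weight does not depend on the choice of control.
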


The first identity holds for any representing control \(h\), while the second
uses its minimal-norm representative \(\Pi_t h\) modulo \(\Ker(R_t)\).

\begin{proof}
Let \(W_t\) be the \(\scrH_t\)-isonormal process associated with the cylindrical
Brownian motion \(W_H\), and put \(g(y):=f(S(t)x+y)\) for \(y\in E\). Since
\(X_{R_t}=W_A(t)\), we have
\[
    P_tf(x+\varepsilon\xi)
    =
    \E f(S(t)x+W_A(t)+\varepsilon S(t)\xi)
    =
    \E g(X_{R_t}+\varepsilon R_th).
\]
The Cameron--Martin differentiation formula applied to \(R_t\) gives
\[
    D_\xi P_tf(x)
    =
    \E[g(X_{R_t})W_t(h)]
    =
    \E[f(U_t^x)W_t(h)].
\]
This proves the first identity. For the second identity, note that \(h-\Pi_t h\in\Ker(R_t)\). Hence
\[
    R_t\Pi_t h=R_th=S(t)\xi.
\]
Applying the first formula with \(\Pi_t h\) in place of \(h\), and using the
definition of \(W_t\), gives
\[
    D_\xi P_tf(x)
    =
    \E[f(U_t^x)W_t(\Pi_t h)]
    =
    \E\Bigl[
    f(U_t^x)
    \int_0^t \iprod{\Pi_t h(s)}{\dd W_H(s)}_H
    \Bigr].
\]
\end{proof}

\begin{theorem}[Explicit Bismut--Elworthy--Li formula]
\label{thm:linear-bel-formula}
Let Assumption \ref{ass:fixed-time-integrability} hold. Let \(x\in E\),
\(\xi\in\mathscr D_t\), and let \(a\in W^{1,\infty}(0,t)\) satisfy
\eqref{eq:a-boundary-conditions}. Then, for every bounded Borel function
\(f:E\to\R\), the derivative \(D_\xi P_tf(x)\) exists and
\begin{equation}\label{eq:linear-bel-formula}
    D_\xi P_tf(x)
    =
    \E\Bigl[
    f(U_t^x)
    \int_0^t
    \iprod{a'(s)B^\dagger S(s)\xi}{\dd W_{H}(s)}_{H}
    \Bigr].
\end{equation}
The stochastic integral is the real Gaussian random variable \(W_t(v_a^\xi)\).
If \(f\in C_{\rm b}^1(E)\), then the same derivative is also given by
\begin{equation}\label{eq:linear-bel-consistency-smooth}
    D_\xi P_tf(x)
    =
    \E\,\lb S(t)\xi,Df(U_t^x)\rb .
\end{equation}
\end{theorem}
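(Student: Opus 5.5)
The plan is to reduce everything to Theorem \ref{thm:RKHS}, using Proposition \ref{prop:linear-exact-control} to supply the control. Fix \(x\in E\), \(\xi\in\mathscr D_t\), and \(a\) as in \eqref{eq:a-boundary-conditions}. First, by \eqref{eq:linear-control-norm}, the function \(v_a^\xi=a'(\cdot)B^\dagger S(\cdot)\xi\) belongs to \(\scrH_t=L^2(0,t;H)\). Here measurability of \(s\mapsto B^\dagger S(s)\xi\) is part of the hypothesis \(S(\cdot)\xi\in L^2(0,t;E_B)\), because \(B^\dagger\) is an isometry of \(E_B\) onto \((\Ker B)^\perp\). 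Second, Proposition \ref{prop:linear-exact-control} gives \(R_tv_a^\xi=S(t)\xi\). The pair \((\xi,h)\) with \(h:=v_a^\xi\) therefore satisfies the hypothesis of Theorem \ref{thm:RKHS}. The first identity there then yields that \(D_\xi P_tf(x)\) exists for every bounded Borel \(f\), and that
\[
    D_\xi P_tf(x)=\E[f(U_t^x)W_t(v_a^\xi)].
\]

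Next I will identify \(W_t(v_a^\xi)\) with the stochastic integral in \eqref{eq:linear-bel-formula}. By the notation \eqref{eq:Wh}, with \(v_a^\xi\) extended by zero outside \((0,t)\), we have \(W_t(v_a^\xi)=\int_0^t\iprod{v_a^\xi(s)}{\dd W_H(s)}_H\). This is exactly \(\int_0^t\iprod{a'(s)B^\dagger S(s)\xi}{\dd W_H(s)}_H\). It is a centred real Gaussian random variable with variance \(\|v_a^\xi\|_{\scrH_t}^2\), so in particular it lies in \(L^2(\Omega)\) and the expectation is finite. The only point needing care is the consistency of the isonormal process \(W_t\) over \(\scrH_t\) with the restriction of the \(L^2(\R_+;H)\)-isonormal process induced by \(W_H\). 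This holds by construction, since both agree on the indicator elements \(\one_{(a,b]}\otimes h\) with \((a,b]\subseteq(0,t]\) and extend by linearity and isometry. This completes \eqref{eq:linear-bel-formula}.

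Finally, for \(f\in C_{\rm b}^1(E)\), Proposition \ref{prop:linear-directional-differentiability-smooth} shows that the limit defining \(D_\xi P_tf(x)\) exists and equals \(\E\lb S(t)\xi,Df(U_t^x)\rb\). The two-sided limit of the difference quotient coincides with the derivative at \(\varepsilon=0\) in the convention \eqref{eq:directional-derivative-convention}, because both are the derivative of the scalar function \(\varepsilon\mapsto P_tf(x+\varepsilon\xi)\) at \(0\). Uniqueness of the derivative then gives \eqref{eq:linear-bel-consistency-smooth}, consistently with \eqref{eq:linear-bel-formula}.

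I expect no genuine obstacle, since the heavy lifting (Cameron--Martin differentiation and the exact control identity) is already done. The main step to check carefully is the legitimacy of applying Theorem \ref{thm:RKHS}, namely that \(v_a^\xi\) is a genuine element of \(\scrH_t\) (measurability together with the bound \eqref{eq:linear-control-norm}). The minor bookkeeping is the identification of \(W_t\) with the \(\R_+\)-based stochastic integral.
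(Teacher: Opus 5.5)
Your proposal is correct and matches the paper's proof essentially step for step. You apply Theorem~\ref{thm:RKHS} with \(h=v_a^\xi\), which is justified by Proposition~\ref{prop:linear-exact-control}; you identify \(W_t(v_a^\xi)\) with the stochastic integral through the definition of the isonormal process; and you invoke Proposition~\ref{prop:linear-directional-differentiability-smooth} for \(f\in C_{\rm b}^1(E)\). Your added remarks on the measurability of \(v_a^\xi\) and on the consistency of \(W_t\) with the \(\R_+\)-based process are sound, and the paper leaves them implicit.
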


\begin{proof}
By Proposition \ref{prop:linear-exact-control},
\[
    R_tv_a^\xi=S(t)\xi.
\]
Applying Theorem \ref{thm:RKHS} with \(h=v_a^\xi\) gives
\[
    D_\xi P_tf(x)
    =
    \E[f(U_t^x)W_t(v_a^\xi)].
\]
By the definition of the \(\scrH_t\)-isonormal process associated with \(W_H\),
\[
    W_t(v_a^\xi)
    =
    \int_0^t
    \iprod{v_a^\xi(s)}{\dd W_H(s)}_H .
\]
Since
\[
    v_a^\xi(s)=a'(s)B^\dagger S(s)\xi,
\]
this gives \eqref{eq:linear-bel-formula}.

If \(f\in C_{\rm b}^1(E)\), the identity
\[
    D_\xi P_tf(x)
    =
    \E\,\lb S(t)\xi,Df(U_t^x)\rb
\]
is precisely Proposition \ref{prop:linear-directional-differentiability-smooth}.
This proves \eqref{eq:linear-bel-consistency-smooth}.
\end{proof}

The formula gives the following gradient estimate:

\begin{corollary}[Directional gradient estimate]
\label{cor:linear-directional-gradient-estimate}
Under the assumptions of Theorem \ref{thm:linear-bel-formula},
\[
    |D_\xi P_tf(x)|
    \le
    \sqrt{2/\pi}\,\|f\|_\infty
    \|a'\|_{L^\infty(0,t)}
    \Bigl(
    \int_0^t\|S(s)\xi\|_{E_B}^2\dd s
    \Bigr)^{1/2}.
\]
In particular, for \(a(s)=s/t\),
\begin{equation}\label{eq:linear-gradient-estimate-linear-a}
    |D_\xi P_tf(x)|
    \le
    \frac{\sqrt{2/\pi}}{t}\,
    \|f\|_\infty
    \Bigl(
    \int_0^t\|S(s)\xi\|_{E_B}^2\dd s
    \Bigr)^{1/2}.
\end{equation}
\end{corollary}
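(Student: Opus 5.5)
The plan is to start from the representation \eqref{eq:linear-bel-formula} of Theorem \ref{thm:linear-bel-formula}, which gives
\[
    D_\xi P_tf(x)=\E\bigl[f(U_t^x)\,W_t(v_a^\xi)\bigr].
\]
Since \(f\) is bounded, the triangle inequality for expectations yields
\[
    |D_\xi P_tf(x)|\le \|f\|_\infty\,\E|W_t(v_a^\xi)|.
\]
No cancellation or independence is needed here. Because \(W_t\) is an \(\scrH_t\)-isonormal process, \(W_t(v_a^\xi)\) is a centred real Gaussian random variable with variance \(\|v_a^\xi\|_{\scrH_t}^2\).

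The next step is the elementary first absolute moment of a centred Gaussian. If \(Z\sim N(0,\sigma^2)\), then
\[
    \E|Z|=\sigma\sqrt{2/\pi},
\]
which follows by computing \(2\int_0^\infty z\,e^{-z^2/2\sigma^2}\dd z/(\sigma\sqrt{2\pi})\). Hence
\[
    \E|W_t(v_a^\xi)|=\sqrt{2/\pi}\,\|v_a^\xi\|_{\scrH_t}.
\]
Inserting the control norm bound \eqref{eq:linear-control-norm}, namely \(\|v_a^\xi\|_{\scrH_t}\le\|a'\|_{L^\infty(0,t)}\|\xi\|_{\mathscr D_t}\), gives the general estimate. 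This uses only that \(|a'(s)|\le\|a'\|_\infty\) a.e. and \(\|B^\dagger S(s)\xi\|_H=\|S(s)\xi\|_{E_B}\).

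For the special case, take \(a(s)=s/t\). This function lies in \(W^{1,\infty}(0,t)\), satisfies \(a(0+)=0\) and \(a(t-)=1\), and has \(a'\equiv 1/t\), so \(\|a'\|_\infty=1/t\). Substituting this yields \eqref{eq:linear-gradient-estimate-linear-a}.

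There is no real obstacle. The only point needing care is that the Gaussian moment identity applies to \(W_t(v_a^\xi)\) itself and not just to a bound for it. This holds because \(v_a^\xi\in\scrH_t\) by \eqref{eq:linear-control-norm}, so \(W_t(v_a^\xi)\) is a well-defined Gaussian random variable with exactly the stated variance.
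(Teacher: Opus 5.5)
Your proposal is correct and follows the paper's proof step by step. You bound $|D_\xi P_tf(x)|$ by $\|f\|_\infty\,\E|W_t(v_a^\xi)|$, use $\E|Z|=\sqrt{2/\pi}\,\sigma$ for the centred Gaussian $W_t(v_a^\xi)$ with variance $\|v_a^\xi\|_{\scrH_t}^2$, insert \eqref{eq:linear-control-norm}, and then specialise to $a(s)=s/t$, for which $\|a'\|_{L^\infty(0,t)}=1/t$.
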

\begin{proof}
By Theorem \ref{thm:linear-bel-formula},
\[
    |D_\xi P_tf(x)|
    \le
    \|f\|_\infty\,\E|W_t(v_a^\xi)|.
\]
The random variable \(W_t(v_a^\xi)\) is centred Gaussian with variance
\(\|v_a^\xi\|_{\scrH_t}^2\). Therefore
\[
    \E|W_t(v_a^\xi)|
    =
    \sqrt{2/\pi}\,\|v_a^\xi\|_{\scrH_t}.
\]
By \eqref{eq:linear-control-norm},
\[
    \|v_a^\xi\|_{\scrH_t}
    \le
    \|a'\|_{L^\infty(0,t)}
    \Bigl(
    \int_0^t\|S(s)\xi\|_{E_B}^2\dd s
    \Bigr)^{1/2}.
\]
This proves the first estimate. The second follows by taking \(a(s)=s/t\).
\end{proof}

\section{Examples}
\label{sec:examples-first-consequences}

We illustrate the square function condition by working out two examples:
a basic Hilbert space example and the Dirichlet heat equation in \(L^p(0,1)\).

\subsection{A self-adjoint Hilbert space example}
\label{subsec:self-adjoint-hilbert-example}

Let \(E\) be a real separable Hilbert space, and let \(A\) be a
non-negative self-adjoint operator on \(E\). We put
\[
    S(t):=e^{-tA},
    \quad t\ge0.
\]
By spectral theory, \(S\) is a \(C_0\)-semigroup of self-adjoint contractions
and its generator is \(-A\). Let \(\alpha\ge0\),
set \(H=E\), and take
\[
    B:=(I+A)^{-\alpha}.
\]
Then \(B\) is bounded, one-to-one, self-adjoint, and has dense range. The
Hilbertian noise range is
\[
    E_B
    =
    \Ran ((I+A)^{-\alpha})
    =
    \Dom((I+A)^\alpha),
\]
with norm
\[
    \|x\|_{E_B}
    =
    \|(I+A)^\alpha x\|.
\]
Moreover,
\[
    B^\dagger x=(I+A)^\alpha x,
    \quad x\in E_B.
\]

To check Assumption \ref{ass:fixed-time-integrability} we must show that
\[
    s\mapsto S(s)B
    =
    e^{-sA}(I+A)^{-\alpha}
\]
belongs to \(\gamma(L^2(0,t;E),E)\).
Since \(E\) is a Hilbert space, this is equivalent to the associated covariance
operator being trace class. This covariance operator is
\[
    Q_t
    =
    \int_0^t
    e^{-2sA}(I+A)^{-2\alpha}\dd s.
\]
By the spectral theorem,
\[
    Q_t=q_{\alpha,t}(A),
\]
where
\[
    q_{\alpha,t}(\lambda)
    =
    (1+\lambda)^{-2\alpha}
    \frac{1-e^{-2t\lambda}}{2\lambda},
    \quad \lambda>0,
\]
and \(q_{\alpha,t}(0):=t\).  Assumption \ref{ass:fixed-time-integrability}
takes the form
\[
    \Tr (q_{\alpha,t}(A))<\infty.
\]
If \(A\) has compact resolvent and eigenvalues
\((\lambda_n)_{n\ge1}\), counted with multiplicity, this condition becomes
\[
    \sum_{n\ge1}
    (1+\lambda_n)^{-2\alpha}
    \frac{1-e^{-2t\lambda_n}}{2\lambda_n}
    <\infty,
\]
with the usual interpretation of the summand at \(\lambda_n=0\).

We now compute the admissible directions. Let \(\mathsf E_{A}\) be
the spectral measure of \(A\). For \(\xi\in E\), define the finite
measure
\[
    \mu_\xi(\Delta)
    = \iprod{\mathsf E_{A}(\Delta)\xi}{\xi}_{E} =
    \|\mathsf E_{A}(\Delta)\xi\|^2,
    \qquad \Delta\in\mathscr B(\R_+).
\]
Then
\[
\begin{aligned}
    \int_0^t
    \|S(s)\xi\|_{E_B}^2\dd s
    & =
    \int_0^t
    \|(I+A)^\alpha e^{-sA}\xi\|^2
    \dd s
    =
    \int_{[0,\infty)}
    m_{\alpha,t}(\lambda)\dd\mu_\xi(\lambda),
\end{aligned}
\]
where
\[
    m_{\alpha,t}(\lambda)
    =
    (1+\lambda)^{2\alpha}
    \frac{1-e^{-2t\lambda}}{2\lambda},
    \quad \lambda>0,
\]
and \(m_{\alpha,t}(0):=t\). This gives the following description of
\(\mathscr D_t\).

If \(0\le\alpha\le1/2\), then \(m_{\alpha,t}\) is bounded on
\([0,\infty)\). Hence
\[
    \mathscr D_t=E,
\]
and there is a constant \(C_{\alpha,t}\) such that
\[
    \Bigl(
    \int_0^t
    \|S(s)\xi\|_{E_B}^2\dd s
    \Bigr)^{1/2}
    \le
    C_{\alpha,t}\|\xi\|,
    \quad \xi\in E.
\]
If \(\alpha>1/2\), then
\[
    m_{\alpha,t}(\lambda)
    \simeq_{\alpha,t}
    (1+\lambda)^{2\alpha-1},
    \quad \lambda\ge0.
\]
Consequently,
\[
    \mathscr D_t
    =
    \Dom((I+A)^{\alpha-1/2}),
\]
and the square function norm is equivalent to
\(\|(I+A)^{\alpha-1/2}\xi\|\).

The explicit Bismut--Elworthy--Li formula therefore gives the following concrete expression.
Let \(f:E\to\R\) be bounded and Borel measurable, let \(x\in E\), and let
\(\xi\in\mathscr D_t\). If \(a\in W^{1,\infty}(0,t)\) satisfies
\(a(0+)=0\) and \(a(t-)=1\), then
\[
    D_\xi P_tf(x)
    =
    \E\Bigl[
    f(U_t^x)
    \int_0^t
    a'(s)
    \iprod{(I+A)^\alpha e^{-sA}\xi}
       {\dd W_H(s)}_{H}
    \Bigr].
\]
Moreover, with the choice \(a(s) = s/t\),
\[
    |D_\xi P_tf(x)|
    \le
    \frac{\sqrt{2/\pi}}{t}\,\|f\|_\infty
    \Bigl(
    \int_0^t
    \|(I+A)^\alpha e^{-sA}\xi\|^2\dd s
    \Bigr)^{1/2}.
\]
Thus, for \(0\le\alpha\le1/2\), the semigroup \(P_t\) maps bounded Borel
functions into Lipschitz continuous functions on \(E\). For \(\alpha>1/2\),
the same formula gives directional smoothing in the fractional domain
\(\Dom((I+A)^{\alpha-1/2})\).

\subsection{The Dirichlet heat semigroup on \texorpdfstring{\(L^p(0,1)\)}{Lp(0,1)}}
\label{subsec:dirichlet-heat-lp-example}

We next consider a simple non-Hilbertian example. Let
\[
    E=L^p(0,1),
    \quad 1\le p\le2,
\]
and let \(S_p=(S_p(t))_{t\ge0}\) be the heat semigroup on \(E\)
generated by the Dirichlet Laplacian on \(L^p(0,1)\).
We take \(H=L^2(0,1)\) and let \(B:H\to E\)
be the natural inclusion mapping.

In this case
\[
    E_B=\Ran (B)=L^2(0,1),
\]
viewed as a Hilbertian subspace of \(L^p(0,1)\), and
\[
    \|y\|_{E_B}=\|y\|_{L^2(0,1)}.
\]
Moreover \(B^\dagger\) is just the identity map on \(L^2(0,1)\).

The Dirichlet heat semigroup is given by
\[
    S_p(t)f(r)
    =
    \int_0^1 k_t(r,u)f(u)\dd u,
    \quad f\in C_{\rm c}(0,1),\ r\in(0,1),
\]
where
\[
    k_t(r,u)
    =
    2\sum_{n=1}^\infty
    e^{-n^2\pi^2t}
    \sin(n\pi r)\sin(n\pi u),
    \quad r,u\in(0,1).
\]
Using the standard identification
\[
    \gamma\bigl(L^2(0,t;L^2(0,1)),L^p(0,1)\bigr)
    \simeq
    L^p\bigl(0,1;L^2((0,t)\times(0,1))\bigr),
\]
to check Assumption \ref{ass:fixed-time-integrability} it is enough to show
that
\begin{equation}\label{eq:sqfc-estimate}
    r\mapsto
    \Bigl(
    \int_0^t\int_0^1 |k_s(r,u)|^2\dd u\dd s
    \Bigr)^{1/2}
    \quad\hbox{belongs to } L^p(0,1).
\end{equation}
By symmetry and the semigroup identity,
\[
    \int_0^1 |k_s(r,u)|^2\dd u
    =
    k_{2s}(r,r).
\]
The eigenfunction expansion gives
\[
    k_{2s}(r,r)
    =
    2\sum_{n=1}^\infty
    e^{-2n^2\pi^2s}\sin^2(n\pi r)
    \le
    2\sum_{n=1}^\infty e^{-2n^2\pi^2s}
    \lesssim s^{-1/2},
    \quad s>0.
\]
Consequently,
\[
    \int_0^t\int_0^1 |k_s(r,u)|^2\dd u\dd s
    \lesssim
    \int_0^t s^{-1/2}\dd s
    <\infty,
\]
uniformly in \(r\in(0,1)\). This proves \eqref{eq:sqfc-estimate}.

The admissible directions are easy to determine.
Since \(E_B=L^2(0,1)\),
\[
    \mathscr D_t
    =
    \Bigl\{
    \xi\in L^p(0,1):
    S_p(\cdot)\xi\in L^2(0,t;L^2(0,1))
    \Bigr\}.
\]
For \(1\le p\le2\), the heat semigroup estimate
\[
    \|S_p(s)\xi\|_{L^2(0,1)}
    \le
    C_p\,s^{-\beta_p}\|\xi\|_{L^p(0,1)},
    \qquad
    \beta_p=\frac12\Bigl(\frac1p-\frac12\Bigr),
\]
gives
\[
    \int_0^t
    \|S_p(s)\xi\|_{L^2(0,1)}^2\dd s
    \le
    C_{p,t}^2\|\xi\|_{L^p(0,1)}^2,
\]
because \(2\beta_p<1\). Hence
\[
    \mathscr D_t=L^p(0,1),
    \quad 1\le p\le2.
\]
More explicitly, one checks that
\[
    C_{p,t}
    \lesssim_p
    t^{\,3/4-1/(2p)}.
\]

The Bismut--Elworthy--Li formula therefore holds in every direction
\(\xi\in L^p(0,1)\). For bounded Borel \(f:L^p(0,1)\to\R\), \(x,\xi\in L^p(0,1)\),
and \(a\in W^{1,\infty}(0,t)\) satisfying \(a(0+)=0\) and \(a(t-)=1\), one obtains
\[
    D_\xi P_tf(x)
    =
    \E\Bigl[
    f(U_t^x)
    \int_0^t
    a'(s)\iprod{S_p(s)\xi}{\dd W_{L^2(0,1)}(s)}_{L^2(0,1)}
    \Bigr].
\]
This implies the estimate
\[
    |D_\xi P_tf(x)|
    \le
    \sqrt{2/\pi}\,\|f\|_\infty
    \|a'\|_{L^\infty(0,t)}
    \Bigl(
    \int_0^t
    \|S_p(s)\xi\|_{L^2(0,1)}^2\dd s
    \Bigr)^{1/2}.
\]
With the choice \(a(s)=s/t\), this gives the explicit estimate
\[
    |D_\xi P_tf(x)|
    \le
    C_p\,t^{-1/4-1/(2p)}
    \|f\|_\infty\|\xi\|_{L^p(0,1)}.
\]
Thus, in this example, \(P_t\) maps bounded Borel functions on \(L^p(0,1)\) into
Lipschitz continuous functions for every \(t>0\) and every \(1\le p\le2\).

\section{Relation with the RKHS criterion}
\label{sec:relation-rkhs-criterion}

We briefly relate the preceding formula to the usual Cameron--Martin
description of linear Ornstein--Uhlenbeck semigroups. Let \(\mu_t\) be the law
of \(W_A(t)\), and let \(H_t\) be its reproducing kernel Hilbert space. In the
present notation,
\[
    H_t=\Ran (R_t),
    \qquad
    R_th=\int_0^t S(t-s)Bh(s)\dd s,
\]
with the quotient Hilbert norm inherited from \(L^2(0,t;H)\).

The classical Gaussian criterion says that differentiability of \(P_tf\) in the
direction \(\xi\) is governed by the inclusion
\[
    S(t)\xi\in H_t.
\]
Theorem \ref{thm:RKHS} is the corresponding derivative formula in the
present notation.

As we have seen in Proposition \ref{prop:linear-exact-control}, the square
function condition \(\xi \in \mathscr D_t\) used in this paper is a concrete
sufficient condition for the RKHS inclusion \(S(t)\xi\in H_t\).
In general, however, the sufficient condition does not characterise all directions for which
\(S(t)\xi\in H_t\), as is shown in the following example.

\begin{example}
Let \(E=\mathbb R^2\),
\(H=\mathbb R\), and let
\[
    Bh = h e_1, \quad h\in\mathbb R,
\]
where \(e_1=(1,0)\). Then \(E_B=\operatorname{span}\{e_1\}\). Let
\((S(s))_{s\in\mathbb R}\) be the rotation group,
\[
    S(s)=
    \begin{pmatrix}
    \cos s & -\sin s\\
    \sin s & \cos s
    \end{pmatrix}.
\]
For \(t>0\), the operator \(R_t:L^2(0,t)\to\mathbb R^2\) is given by
\[
    R_t u
    =
    \int_0^t S(t-s)e_1\,u(s)\,{\rm d}s .
\]
We claim that \(H_t=\Ran(R_t)=\mathbb R^2\). Indeed, if
\(y\in\mathbb R^2\) is orthogonal to \(\Ran(R_t)\), then
\(\iprod{y}{S(t-s)e_1}_{\R^2}=0\) for almost all \(s\in(0,t)\), and by continuity
it vanishes identically on \((0,t)\). Since the vectors \(S(t-s)e_1\), \(0<s<t\), span
\(\mathbb R^2\), it follows that \(y=0\). Thus
\(\Ran(R_t)\) is dense in \(\mathbb R^2\), and since it is a
finite-dimensional subspace, \(\Ran(R_t)=\mathbb R^2\).

Consequently \(S(t)\xi\in H_t\) for every \(\xi\in\mathbb R^2\). On the
other hand, the condition \(\xi\in\mathscr D_t\) requires
\[
    S(s)\xi\in E_B=\operatorname{span}\{e_1\}
    \ \ \hbox{for almost all }s\in(0,t).
\]
Writing \(\xi=(\xi_1,\xi_2)\), this means
\[
    \xi_1\sin s+\xi_2\cos s=0
    \ \ \hbox{for almost all }s\in(0,t).
\]
Again by continuity, this identity holds for all \(s\in(0,t)\), and therefore
\(\xi_1=\xi_2=0\). Hence
\[
    \mathscr D_t=\{0\},
    \qquad
    H_t=\mathbb R^2 .
\]
Thus the square function condition is only a sufficient condition for the
Cameron--Martin inclusion \(S(t)\xi\in H_t\), not a necessary one.
\end{example}

The following theorem gives a sufficient condition for equivalence.

\begin{theorem}[Equivalence with the RKHS criterion]
\label{thm:Dt-Ht-equivalence}
Fix \(t>0\). Suppose that the following two conditions hold.

\begin{enumerate}[leftmargin=*, label=(\roman*)]
\item[\rm(i)] The noise range is invariant under the semigroup up to time \(t\):
\[
    S(r)E_B\subseteq E_B,\quad 0\le r\le t,
\]
and the restricted operators \(S(r):E_B\to E_B\), \(0\le r\le t\), are
uniformly bounded.

\item[\rm(ii)] The semigroup has the following backward regularity property with
respect to \(E_B\): if \(\xi\in E\) and \(S(t)\xi\in E_B\), then
\(S(s)\xi\in E_B\) for almost all \(s\in(0,t)\)
and
\[
    \int_0^t \|S(s)\xi\|_{E_B}^2\,{\rm d}s<\infty .
\]
\end{enumerate}

Then for every \(\xi\in E\) we have
\[
    \xi\in\mathscr D_t
    \quad\Longleftrightarrow\quad
    S(t)\xi\in H_t .
\]
\end{theorem}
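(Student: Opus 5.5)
The forward implication is already available. If \(\xi\in\mathscr D_t\), Proposition \ref{prop:linear-exact-control} gives \(S(t)\xi=R_tv_a^\xi\) for any admissible \(a\), for instance \(a(s)=s/t\). Hence \(S(t)\xi\in\Ran(R_t)=H_t\). Neither (i) nor (ii) is needed for this direction.

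For the converse, assume \(S(t)\xi\in H_t\). By (ii) it suffices to show that \(S(t)\xi\in E_B\), and this is where (i) enters. Pick \(h\in\scrH_t\) with \(R_th=S(t)\xi\). The integrand \(S(t-s)Bh(s)\) then takes values in \(E_B\): indeed \(Bh(s)\in E_B\), and \(S(t-s)\) maps \(E_B\) into \(E_B\) by (i).

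The plan is to show that \(s\mapsto S(t-s)Bh(s)\) is Bochner integrable in \(E_B\). Its \(E_B\)-valued Bochner integral then coincides with the \(E\)-valued one, because \(E_B\hookrightarrow E\) is continuous. It follows that \(R_th\in E_B\).

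\begin{itemize}
\item \emph{Norm bound.} Set \(M:=\sup_{0\le r\le t}\|S(r)\|_{\scrL(E_B)}<\infty\). Then
\[
\|S(t-s)Bh(s)\|_{E_B}\le M\|Bh(s)\|_{E_B}\le M\|h(s)\|_H,
\]
which is integrable on \((0,t)\) since \(h\in L^2\subseteq L^1\) there.
\item \emph{Strong measurability in \(E_B\).} This is the main obstacle, because (i) gives only uniform boundedness on \(E_B\), not strong continuity. I would first try reducing to \(h\) a simple function, by approximating \(h\) in \(L^2(0,t;H)\), so that it suffices to treat \(s\mapsto S(t-s)y\) for fixed \(y\in E_B\). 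Since \(E_B\) is a separable Hilbert space (as \(H\) is separable), Pettis' theorem reduces strong measurability to weak measurability. For \(z\in E_B\), the map \(s\mapsto (S(t-s)y|z)_{E_B}\) should be measurable as a pointwise limit of the functionals \(\lb S(t-s)y, x^*\rb\), using that the image of \(E^*\) is dense in \(E_B^*\).
\item \emph{Avoiding measurability, if that fails.} Use weak compactness instead. The element \(S(t)\xi=R_th\) is the \(E\)-limit of \(R_th_n\) for simple \(h_n\to h\) in \(L^2\). Each \(R_th_n\) is a finite sum of terms \(\int_I S(t-s)y\,\dd s\). The bound \(\|R_th_n\|_{E_B}\le M\sqrt t\,\|h_n\|_{L^2}\) makes the sequence \(E_B\)-bounded, so a subsequence converges weakly in the Hilbert space \(E_B\). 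That weak limit must equal the \(E\)-limit \(S(t)\xi\), so \(S(t)\xi\in E_B\).
\item \emph{Remaining gap.} The weak-compactness route still requires each simple term \(\int_I S(t-s)y\,\dd s\) to lie in \(E_B\) with the stated bound. I would handle this by approximating the integral by Riemann sums: these lie in \(E_B\), are bounded there, and converge in \(E\), so the same weak-limit argument applies to each term.
\end{itemize}

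Once \(S(t)\xi\in E_B\) is established, (ii) yields \(S(s)\xi\in E_B\) for almost all \(s\in(0,t)\) with \(\int_0^t\|S(s)\xi\|_{E_B}^2\,\dd s<\infty\). Measurability of \(s\mapsto S(s)\xi\) in \(E_B\) is implicit in (ii), in the same sense as in the definition of \(\mathscr D_t\). Hence \(\xi\in\mathscr D_t\), which proves the converse.
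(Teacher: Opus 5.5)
Your proposal is correct and follows the paper's proof. The forward direction comes from the exact control identity, and the converse uses (i) to show \(\Ran(R_t)\subseteq E_B\) via \(E_B\)-valued Bochner integrability of \(s\mapsto S(t-s)Bh(s)\), then invokes (ii). The paper asserts that Bochner integrability directly from the norm bound, and either of your routes supplies the measurability detail it leaves implicit: the Pettis argument (using that \(E^*\) restricted to \(E_B\) is weak\(^*\)-dense, hence norm dense, in \(E_B^*\)) or the weak-compactness argument.
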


\begin{proof}
The implication
\(\Longrightarrow\) has already been proved above, and does not use the additional
assumptions. It remains to prove the converse.

Let \(R_t:L^2(0,t;H)\to E\) be the deterministic operator associated with
the stochastic convolution as before.
We first observe that assumption (i) implies
\[
    H_t=\Ran(R_t)\subseteq E_B.
\]
Indeed, if \(u\in L^2(0,t;H)\), then \(Bu(s)\in E_B\) for almost all \(s\in (0,t)\), and
\[
    S(t-s)Bu(s)\in E_B
    \ \ \hbox{for almost all }s\in(0,t).
\]
Moreover, by the uniform boundedness of the restrictions \(S(r):E_B\to E_B\),
\[
    \|S(t-s)Bu(s)\|_{E_B}
    \lesssim
    \|Bu(s)\|_{E_B}
    \le
    \|u(s)\|_{H}.
\]
Since \(u\in L^2(0,t;H)\subseteq L^1(0,t;H)\), the function
\(s\mapsto S(t-s)Bu(s)\) is Bochner integrable as an \(E_B\)-valued function.
Therefore
\[
    R_tu\in E_B.
\]
Thus \(\Ran(R_t)\subseteq E_B\), as claimed.

Now suppose that \(S(t)\xi\in H_t\). Since \(H_t\subseteq E_B\), we have
\(S(t)\xi\in E_B\). Assumption (ii) then gives
\(S(s)\xi\in E_B\) for almost all \(s\in(0,t)\)
and
\[
    \int_0^t \|S(s)\xi\|_{E_B}^2\,{\rm d}s<\infty .
\]
Thus \(S(t)\xi\in H_t\) implies \(\xi\in\mathscr D_t\), and the proof is complete.
\end{proof}

Let us indicate how this criterion relates to the examples of
Section \ref{sec:examples-first-consequences}. In the self-adjoint Hilbert
space example, with \(B=(I+A)^{-\alpha}\), the space
\[
    E_B=\Dom((I+A)^\alpha)
\]
is invariant under \(S(r)=e^{-rA}\), and the restrictions of \(S(r)\) to
\(E_B\) are contractions. Thus condition {\rm (i)} of
Theorem \ref{thm:Dt-Ht-equivalence} holds. Condition {\rm (ii)}, however,
is equivalent in this example to the assertion that
\[
    S(\cdot)\xi\in L^2(0,t;E_B)
\]
whenever \(S(t)\xi\in E_B\). By the computation in
Subsection \ref{subsec:self-adjoint-hilbert-example},
in the typical unbounded case this holds for all \(\xi\in E\) precisely in
the range \(0\le\alpha\le1/2\). In that range we have \(\mathscr D_t=E\), and the theorem gives
\(S(t)\xi\in H_t\) for all \(\xi\in E\).
For \(\alpha>1/2\) the theorem does not apply, and the converse implication
fails in general: the analytic smoothing of \(S(t)\) may map \(S(t)\xi\) into
\(H_t\) even though the square function
\[
    \int_0^t \|S(s)\xi\|_{E_B}^2\,{\rm d}s
\]
is infinite.

In the Dirichlet heat example on \(L^p(0,1)\), \(1\le p\le2\), one has
\(E_B=L^2(0,1)\), and the heat semigroup leaves \(L^2(0,1)\) invariant
contractively. Hence condition {\rm (i)} holds. Moreover, the heat kernel
estimate used in Subsection \ref{subsec:dirichlet-heat-lp-example} gives
\[
    S_p(\cdot)\xi\in L^2(0,t;L^2(0,1)),
    \quad \xi\in L^p(0,1).
\]
Since \(S_p(t)\xi\in L^2(0,1)\) for every \(t>0\) and every
\(\xi\in L^p(0,1)\), condition {\rm (ii)} holds as well. Thus the theorem
applies and recovers our findings that \(\mathscr D_t=L^p(0,1)\) and \(S_p(t)\xi\in H_t\) for all \(\xi\in L^p(0,1)\).

\bigskip
\noindent
{\em Acknowledgement and AI disclosure} --
A first sketch for this paper was written approximately 15 years ago during a
visit of the author to Bohdan Maslowski, while working on our joint paper
\cite{MaslowskiVanNeerven2012}. The invitation to contribute to Bohdan's 70th
birthday volume provided an excellent opportunity to return to the project.

The author used GPT 5.5 as a tool for mathematical brainstorming, restructuring,
and editorial assistance while preparing the manuscript. In particular the
elegant construction involving the space \(E_B\) was suggested by GPT.
All arguments and the final text were checked and revised by the author.

\bibliographystyle{plainnat}
\bibliography{BEL}

\begin{thebibliography}{14}
\providecommand{\natexlab}[1]{#1}
\providecommand{\url}[1]{\texttt{#1}}
\expandafter\ifx\csname urlstyle\endcsname\relax
  \providecommand{\doi}[1]{doi: #1}\else
  \providecommand{\doi}{doi: \begingroup \urlstyle{rm}\Url}\fi

\bibitem[Bismut(1984)]{Bismut1984}
J.-M. Bismut.
\newblock \emph{Large Deviations and the {M}alliavin Calculus}, volume~45 of
  \emph{Progress in Mathematics}.
\newblock Birkh{\"a}user Boston, Boston, MA, 1984.

\bibitem[Bogachev(1998)]{Bogachev1998}
V.I. Bogachev.
\newblock \emph{Gaussian Measures}, volume~62 of \emph{Mathematical Surveys and
  Monographs}.
\newblock American Mathematical Society, Providence, RI, 1998.

\bibitem[Chojnowska-Michalik and Goldys(2002)]{ChojnowskaMichalikGoldys2002}
A.~Chojnowska-Michalik and B.~Goldys.
\newblock Symmetric {O}rnstein--{U}hlenbeck semigroups and their generators.
\newblock \emph{Probab. Theory Relat. Fields.}, 124:\penalty0 459--486, 2002.

\bibitem[Da~Prato and Zabczyk(1992)]{DaPratoZabczyk1992}
G.~Da~Prato and J.~Zabczyk.
\newblock \emph{Stochastic Equations in Infinite Dimensions}, volume~44 of
  \emph{Encyclopedia of Mathematics and its Applications}.
\newblock Cambridge University Press, Cambridge, 1992.

\bibitem[Da~Prato and Zabczyk(1996)]{DaPratoZabczyk1996}
G.~Da~Prato and J.~Zabczyk.
\newblock \emph{Ergodicity for Infinite Dimensional Systems}, volume 229 of
  \emph{London Mathematical Society Lecture Note Series}.
\newblock Cambridge University Press, Cambridge, 1996.

\bibitem[Elworthy and Li(1994)]{ElworthyLi1994}
K.D. Elworthy and X.-M. Li.
\newblock Formulae for the derivatives of heat semigroups.
\newblock \emph{J. Funct. Anal.}, 125\penalty0 (1):\penalty0 252--286, 1994.

\bibitem[Goldys and Peszat(2024)]{GoldysPeszat2024}
B.~Goldys and Sz. Peszat.
\newblock Differentiability of transition semigroup of generalized
  {O}rnstein--{U}hlenbeck process: a probabilistic approach.
\newblock \emph{arXiv2410.20074}, 2024.

\bibitem[Goldys and van Neerven(2003)]{GoldysVanNeerven2003}
B.~Goldys and J.M.A.M. van Neerven.
\newblock Transition semigroups of {B}anach space valued
  {O}rnstein--{U}hlenbeck processes.
\newblock \emph{Acta App. Math.}, 76:\penalty0 283--330, 2003.

\bibitem[Hyt{\"o}nen et~al.(2017)Hyt{\"o}nen, van Neerven, Veraar, and
  Weis]{HNVW-volume2}
T.P. Hyt{\"o}nen, J.M.A.M. van Neerven, M.C. Veraar, and L.W. Weis.
\newblock \emph{Analysis in {B}anach Spaces. {V}olume II}, volume~67 of
  \emph{Ergebnisse der Mathematik und ihrer Grenzgebiete. 3. Folge. A Series of
  Modern Surveys in Mathematics}.
\newblock Springer, Cham, 2017.

\bibitem[Maslowski and van Neerven(2013)]{MaslowskiVanNeerven2012}
B.~Maslowski and J.M.A.M. van Neerven.
\newblock Equivalence of laws and null controllability for {SPDE}s driven by a
  fractional {B}rownian motion.
\newblock \emph{NoDEA}, 20\penalty0 (4):\penalty0 1473--1498, 2013.

\bibitem[{\VAN{Neerven}{van}{van}}~Neerven(1998)]{vanNeerven1998}
J.M.A.M. {\VAN{Neerven}{van}{van}}~Neerven.
\newblock Non-symmetric {O}rnstein--{U}hlenbeck semigroups in {B}anach spaces.
\newblock \emph{J. Funct. Anal.}, 1998.
\newblock Final version dated January 26, 1998.

\bibitem[{\VAN{Neerven}{van}{van}}~Neerven(2010)]{vanNeerven2010}
J.M.A.M. {\VAN{Neerven}{van}{van}}~Neerven.
\newblock \(\gamma\)-{R}adonifying operators---a survey.
\newblock \emph{Proc. CMA}, 44:\penalty0 1--62, 2010.

\bibitem[Nualart(2006)]{Nualart2006}
D.~Nualart.
\newblock \emph{The {M}alliavin Calculus and Related Topics}.
\newblock Springer, Berlin, second edition, 2006.

\bibitem[Peszat and Zabczyk(1995)]{PeszatZabczyk1995}
Sz. Peszat and J.~Zabczyk.
\newblock Strong {F}eller property and irreducibility for diffusions on
  {H}ilbert spaces.
\newblock \emph{The Annals of Probability}, 23\penalty0 (1):\penalty0 157--172,
  1995.

\end{thebibliography}

\end{document}